\documentclass[11pt]{amsart}

\usepackage[margin=1in]{geometry}
\usepackage{amsmath,amssymb,mathtools}
\usepackage{microtype}
\usepackage{xcolor}
\usepackage{hyperref}
\usepackage{placeins}
\usepackage{tikz}
\usetikzlibrary{
  arrows.meta,
  calc,
  decorations.pathreplacing,
  fit,
  matrix,
  positioning
}

\hypersetup{
  colorlinks=true,
  linkcolor=blue,
  citecolor=blue,
  urlcolor=blue
}

\newtheorem{theorem}{Theorem}[section]
\newtheorem{proposition}[theorem]{Proposition}
\newtheorem{corollary}[theorem]{Corollary}
\newtheorem{lemma}[theorem]{Lemma}
\newtheorem*{auxlemma}{Lemma}

\theoremstyle{definition}
\newtheorem{definition}[theorem]{Definition}
\newtheorem{example}[theorem]{Example}

\theoremstyle{remark}
\newtheorem{remark}[theorem]{Remark}

\numberwithin{equation}{section}

\newcommand{\F}{\mathbb F}
\newcommand{\pinf}{\mathfrak p}
\newcommand{\cK}{\mathcal K}
\newcommand{\cA}{\mathcal A}
\newcommand{\coef}{\boldsymbol\theta}
\newcommand{\fp}[1]{\left\{#1\right\}}
\newcommand{\pp}[1]{\left[#1\right]}
\newcommand{\norm}[1]{\left\lVert #1\right\rVert}
\newcommand{\Prob}{\mathbb P}
\newcommand{\E}{\mathbb E}
\newcommand{\one}{\mathbf 1}
\newcommand{\JL}{\mathcal L}
\newcommand{\Nzero}{\mathbb N_0}

\definecolor{profileblue}{RGB}{42,101,171}
\definecolor{freshred}{RGB}{190,55,55}
\definecolor{survivegreen}{RGB}{55,135,90}
\definecolor{knownfill}{RGB}{229,236,246}
\definecolor{freshfill}{RGB}{249,220,217}

\newcommand{\subheading}[1]{\par\medskip\noindent{\bfseries #1.}\quad}
\makeatletter
\renewcommand{\@bibtitlestyle}{\par\bigskip\begin{center}{\scshape\refname}\end{center}\medskip}
\makeatother

\title[Exact renewal laws for minimal common-denominator profiles]
{Exact Renewal Laws for Minimal Common-Denominator Profiles in
Simultaneous Laurent-Series Approximation}

\author{Sanghoon Kwon}
\address{Department of Mathematics Education\\ Catholic Kwandong University \\ Gangneung 25601 \\ Republic of Korea}
\email{shkwon1988@gmail.com, skwon@cku.ac.kr}

\subjclass[2020]{Primary 11J61; Secondary 11K60, 60K05, 94A55}
\keywords{simultaneous Diophantine approximation, Laurent series,
minimal common-denominator profiles, Hankel kernels,
joint linear complexity, renewal processes, jump-size tails}
\date{August 6, 2026}

\begin{document}

\begin{abstract}
Let $\alpha_1,\ldots,\alpha_r$ be independent Haar-random fractional
Laurent series over $\F_q$, and let $L_r(n)$ be the least coefficient
length of a polynomial denominator that simultaneously cancels the
first $n$ negative coefficients.  We prove that the minimal kernel is
a line and that the residual vectors revealed immediately after the
stopping times $T_n=n+L_r(n)-1$ are iid uniform on $\F_q^r$.  Hence the jump
indicators of $L_r(n)$ are iid Bernoulli variables with parameter
$1-q^{-r}$; conditionally on a jump, the residual direction is uniform
on $\mathbb P^{r-1}(\F_q)$.  We also give an exact kernel-growth clock
for positive jump sizes and a geometric tail bound uniform in the depth;
for two series the jump is decided at the first or second kernel-growth
epoch with probabilities $q^{-1}$ and $1-q^{-1}$.  The marked renewal
law yields exact
binomial and fluctuation laws in the depth variable and the density
of newly attained minimal denominator lengths
\[
        \frac{1-q^{-r}}{r}
\]
in the coefficient-length variable.  For $r=1$ this is the classical
iid partial-quotient degree law in the depth coordinate, for which we
give an exact dictionary.  The new probabilistic content is the
simultaneous common-denominator law for $r\ge2$.  We also establish
exact profile-correspondence and record-duality formulas with joint linear
complexity.
\end{abstract}

\maketitle

\setcounter{tocdepth}{1}
\tableofcontents

\section{Introduction}

Throughout, $q$ is a prime power, $\F_q$ is the field with $q$
elements, and $r\ge1$ is an integer.  Let
\[
        K=\F_q(\!(t^{-1})\!)
\]
be the Laurent-series field and let
\[
        \pinf=t^{-1}\F_q[\![t^{-1}]\!]
\]
be its compact additive subgroup of fractional Laurent series.  Every
$\alpha_i\in\pinf$ has a unique expansion
\[
        \alpha_i=\sum_{j\ge1}u^{(i)}_jt^{-j},
        \qquad 1\le i\le r,
\]
with $u_j^{(i)}\in\F_q$.  We equip $\pinf^r$ with product Haar
probability measure.  Equivalently, the coefficient vectors
\[
        \boldsymbol u_j
        =
        (u_j^{(1)},\ldots,u_j^{(r)})\in\F_q^r,
        \qquad j\ge1,
\]
are independent and identically distributed (iid), each uniformly
distributed on $\F_q^r$.

We write $\Prob$ and $\E$ for probability and expectation,
$\Nzero=\{0,1,2,\ldots\}$, $\#E$ for the cardinality of a finite set
$E$, and $\one_E$ for the indicator of an event $E$.  For positive
sequences, $a_N\sim b_N$ means $a_N/b_N\to1$.
For $x=\sum_j a_jt^j\in K$, let $[t^m]x=a_m$.  Its cancellation
length is
\[
 c(x)=\max\{n\ge0:[t^{-1}]x=\cdots=[t^{-n}]x=0\},
\]
with $c(x)=\infty$ when all negative coefficients vanish.  Finally,
\[
 \F_q^\times=\F_q\setminus\{0\},\qquad
 \mathbb P^{r-1}(\F_q)
 =
 (\F_q^r\setminus\{0\})/\F_q^\times,
\]
and $[v]$ denotes the projective class of a nonzero vector
$v\in\F_q^r$.

For $\alpha_1,\ldots,\alpha_r\in\pinf$,
we ask for a single nonzero polynomial $Q\in\F_q[t]$ such that the
first $n$ negative coefficients of every $Q\alpha_i$ vanish.  If
\[
        d(Q)=\deg Q+1
\]
is the coefficient length of $Q$, define
\[
        L_r(n)
        =
        \min\{d(Q):Q\neq0,\ Q\alpha_i
        \text{ has cancellation depth at least }n
        \text{ for every }i\}.
\]
We call $n\mapsto L_r(n)$ the \emph{minimal common-denominator
length profile}.  It is a nondecreasing staircase.

The one-series problem belongs to the continued-fraction theory of
Laurent series, initiated by Artin and developed in many later
directions; see \cite{ArtinI,ArtinII,deMathan,Lasjaunias,Rosen,
Schmidt}.  Its metric theory includes the positive-characteristic
Khinchin law of Houndonougbo and Berth\'e--Nakada, the exactness of
the Artin continued-fraction map, explicit cylinder measures, and
quantitative laws for partial quotients
\cite{Houndonougbo,BertheNakada,LertchoosakulNairMetric,
LertchoosakulNairQuantitative}.  From the linear-complexity side,
Niederreiter's probabilistic theory and Vielhaber's continued-fraction
isometry give Bernoulli codings of the one-series profile
\cite{NiederreiterProbabilistic,NiederreiterCombinatorial,
VielhaberIsometry}.  Related central limit and invariance principles
for counts of Diophantine inequalities were obtained in
\cite{FuchsMetric,DeligeroNakada,DeligeroFuchsNakada}; those papers
concern different counting observables from the profile-value count
$C_N^{(r)}$ below.  More recent fine-scale metric questions for
partial-quotient degrees are studied in \cite{HuHussainYu}.
For several series the same coefficient equations occur in joint
linear complexity, multisequence synthesis, multi-continued fractions,
and simultaneous Pad\'e approximation
\cite{DaiFengYang,NiederreiterJointSurvey,NiederreiterWang,
VielhaberCanales,BeckermannLabahn}.

The present paper isolates a particular observable that is not the
full joint-linear-complexity profile.  At a fixed depth $n$ we choose
the unique monic generator $Q_n$ of the kernel at the least possible
coefficient length $L_r(n)$.  The first residual vector not already
used to determine $Q_n$ decides whether this same minimal kernel line
persists at depth $n+1$.  Write
\[
 \rho_n^{(r)}
 =
 \bigl(
 [t^{-(n+1)}]Q_n\alpha_1,\ldots,
 [t^{-(n+1)}]Q_n\alpha_r
 \bigr)\in\F_q^r.
\]

\begin{theorem}[Exact marked renewal at the stopping times]
\label{thm:intro-renewal}
Let $\alpha_1,\ldots,\alpha_r$ be independent Haar-random elements of
$\pinf$.  Then the vectors
\[
        \rho_0^{(r)},\rho_1^{(r)},\ldots
\]
are iid and uniformly distributed on $\F_q^r$.  Moreover,
\[
        J_n^{(r)}
        =
        \one_{\{L_r(n+1)>L_r(n)\}}
        =
        \one_{\{\rho_n^{(r)}\ne0\}},
        \qquad n\ge0.
\]
Consequently, the variables $J_0^{(r)},J_1^{(r)},\ldots$ are iid
Bernoulli variables with
\[
        \Prob(J_n^{(r)}=1)=1-q^{-r}.
\]
Conditionally on $J_n^{(r)}=1$, the projective mark
$[\rho_n^{(r)}]\in\mathbb P^{r-1}(\F_q)$ is uniform.
\end{theorem}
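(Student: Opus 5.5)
The plan is to read the renewal structure directly off the linear equations defining $L_r(n)$, using the filtration generated by the coefficient vectors. Write $Q=\sum_{k=0}^{\ell-1}Q_kt^k$ with $d(Q)=\ell$. For $m\ge1$,
\[
[t^{-m}]Q\alpha_i=\sum_{k=0}^{\ell-1}Q_k\,u^{(i)}_{m+k}.
\]
Hence the condition ``$Q\alpha_i$ has cancellation depth at least $n$ for all $i$'' is a homogeneous linear system in $(Q_0,\dots,Q_{\ell-1})$. Its coefficients involve only $\boldsymbol u_1,\dots,\boldsymbol u_{n+\ell-1}$. Let $\mathcal F_m=\sigma(\boldsymbol u_1,\dots,\boldsymbol u_m)$.

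\textbf{Step 1: the minimal kernel is a line.} Suppose $Q,Q'$ are both admissible at depth $n$ with $d(Q)=d(Q')=L_r(n)$ and are not proportional. After normalizing both to be monic, $Q-Q'$ is a nonzero polynomial of strictly smaller length that still satisfies the (linear) depth-$n$ conditions. This contradicts minimality. So there is a unique monic $Q_n$ of length $L_r(n)$; note $Q_0=1$ and $L_r(0)=1$.

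\textbf{Step 2: the jump identity.} Since $L_r$ is nondecreasing, $L_r(n+1)=L_r(n)$ holds iff some polynomial of length $L_r(n)$ is admissible at depth $n+1$. Any such polynomial is admissible at depth $n$, so by Step 1 it is a scalar multiple of $Q_n$. Therefore $L_r(n+1)=L_r(n)$ iff $Q_n$ itself cancels the $(n+1)$-st coefficients, that is, iff $\rho_n^{(r)}=0$. This gives $J_n^{(r)}=\one_{\{\rho_n^{(r)}\ne0\}}$.

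\textbf{Step 3: $T_n$ is a stopping time.} This is the step needing the most care. We have $L_r(n)=\ell$ iff the depth-$n$ system has no nonzero solution of length $<\ell$ but has one of length $\ell$. The first condition is $\mathcal F_{n+\ell-2}$-measurable and the second is $\mathcal F_{n+\ell-1}$-measurable. Hence $\{T_n=m\}\in\mathcal F_m$, so $T_n$ is a stopping time, and $Q_n$ is $\mathcal F_{T_n}$-measurable. Also
\[
T_{n+1}=n+L_r(n+1)\ge n+L_r(n)=T_n+1.
\]
Since $Q_n$ is monic of length $L=L_r(n)$, its leading coefficient is $Q_{L-1}=1$, and
\[
\rho_n^{(r)}=\boldsymbol u_{T_n+1}+\sum_{k=0}^{L-2}Q_k\,\boldsymbol u_{n+1+k}=\boldsymbol u_{T_n+1}+V_n,
\]
where $V_n$ is $\mathcal F_{T_n}$-measurable. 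In particular $\rho_n^{(r)}$ is $\mathcal F_{T_n+1}\subseteq\mathcal F_{T_{n+1}}$-measurable, so $\rho_0^{(r)},\dots,\rho_{n-1}^{(r)}$ are all $\mathcal F_{T_n}$-measurable.

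\textbf{Step 4: the iid uniform law.} By the strong Markov property for iid sequences (decompose over $\{T_n=m\}$), $\boldsymbol u_{T_n+1}$ is uniform on $\F_q^r$ and independent of $\mathcal F_{T_n}$. Translation by the $\mathcal F_{T_n}$-measurable vector $V_n$ preserves the uniform law. So, conditionally on $\mathcal F_{T_n}$, $\rho_n^{(r)}$ is uniform and independent of $\rho_0^{(r)},\dots,\rho_{n-1}^{(r)}$. Induction on $n$ then gives that the $\rho_n^{(r)}$ are iid uniform on $\F_q^r$. The Bernoulli law follows with
\[
\Prob(J_n^{(r)}=1)=\Prob(\rho_n^{(r)}\ne0)=1-q^{-r}.
\]
For the projective mark, a uniform vector conditioned to be nonzero is uniform on $\F_q^r\setminus\{0\}$. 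Every fibre of $v\mapsto[v]$ has exactly $q-1$ elements, so $[\rho_n^{(r)}]$ is uniform on $\mathbb P^{r-1}(\F_q)$.

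The main obstacle is Step 3. One must verify precisely that the event $\{L_r(n)=\ell\}$ uses no coefficient beyond index $n+\ell-1$, and that $\rho_n^{(r)}$ contains the fresh vector $\boldsymbol u_{T_n+1}$ with coefficient exactly $1$. This is where monicity of $Q_n$ from Step 1 is essential.
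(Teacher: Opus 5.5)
Your proposal is correct and follows essentially the same route as the paper. The minimal kernel is shown to be a line: you subtract two monic elements, where the paper takes a linear combination that kills the top coefficient. The jump identity comes from $\cK_{L_r(n)}^{(r)}(n+1)$ being $\F_q Q_n$ or $0$ according to whether $\rho_n^{(r)}=0$. Monicity of $Q_n$ exposes the fresh vector $\boldsymbol u_{T_n+1}$, which is independent of $\mathcal F_{T_n}$ by decomposing over $\{T_n=m\}$. Independence of the residuals then follows, as in the paper, from $T_k+1\le T_n$ for $k<n$, and the uniform projective mark follows from the same count of $q-1$ nonzero vectors on each line.
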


Writing
\[
 \Delta_n^{(r)}=L_r(n+1)-L_r(n),
 \qquad
 \mathcal S_n^{(r)}=rn+1-L_r(n),
\]
Theorem~\ref{thm:jump-clock} below gives, for every $h\ge r+1$,
the depth-uniform estimate
\[
 \Prob\bigl(\Delta_n^{(r)}\ge h\mid J_n^{(r)}=1\bigr)
 \le \frac{q^{r+1-h}}{q-1}.
\]
It also identifies the exact kernel-growth epoch at which a positive
jump occurs.  In particular, for $r=2$ the jump is selected from two
successive kernel-growth epochs, with probabilities $q^{-1}$ and
$1-q^{-1}$.

Let
\[
        d_1^{(r)}<d_2^{(r)}<\cdots
\]
be the increasing list of the distinct values of $L_r(n)$.  For the
Haar-random systems considered in the main theorem this list is
infinite almost surely.

\begin{proposition}[Classical one-series dictionary]
\label{prop:one-series}
Let $r=1$ and let
\[
        \alpha=[0;A_1,A_2,\ldots]
\]
be the continued-fraction expansion of a Haar-typical
$\alpha\in\pinf$.  Let $P_j/Q_j$ be its convergents, normalized by
$Q_0=1$, and put
\[
        s_0=0,
        \qquad
        s_j=\deg Q_j=\sum_{k=1}^j\deg A_k
        \quad (j\ge1).
\]
Then, for every $j\ge1$,
\[
        L_1(n)=s_{j-1}+1
        \qquad
        (s_{j-1}\le n\le s_j-1).
\]
Consequently,
\[
        d_j^{(1)}=s_{j-1}+1,
        \qquad
        J_n^{(1)}
        =
        \one_{\{n+1\in\{s_1,s_2,\ldots\}\}}.
\]
The $j$th plateau has length
\[
        s_j-s_{j-1}=\deg A_j.
\]
Moreover, the partial quotients are iid and
\[
 \Prob(\deg A_j=h)
 =
 (q-1)q^{-h}
 =
 (1-q^{-1})q^{-(h-1)},
 \qquad h\ge1.
\]
Thus the case $r=1$ of
Theorem~\ref{thm:intro-renewal}, including its exact finite-time and
fluctuation consequences, is the classical one-series law in the
depth coordinate.
\end{proposition}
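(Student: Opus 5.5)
The proof has two parts: a deterministic identification of $L_1$ through the best-approximation property of convergents, and the classical metric law for the degrees of the partial quotients.

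\textbf{Step 1: the profile lemma.} For Haar-typical $\alpha$ (irrational, so the expansion is infinite), recall
\[
 \|Q_j\alpha\|=q^{-s_{j+1}},\qquad \|x\|=q^{-c(x)-1}\ \text{for the fractional part}.
\]
This holds because $Q_j\alpha-P_j$ has absolute value $1/(|Q_j||A_{j+1}|)\cdot$(unit), i.e.\ $q^{-(s_j+\deg A_{j+1})}$. Hence $c(Q_j\alpha)=s_{j+1}-1$, since $[t^{-1}]\cdots[t^{-(s_{j+1}-1)}]$ vanish and $[t^{-s_{j+1}}]\ne0$. So $Q_{j-1}$, with $d(Q_{j-1})=s_{j-1}+1$, achieves depth $s_j-1$. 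This gives
\[
 L_1(n)\le s_{j-1}+1\qquad\text{for } n\le s_j-1 .
\]

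\textbf{Step 2: the lower bound.} We need $L_1(n)\ge s_{j-1}+1$ for $n\ge s_{j-1}$. I will use the best-approximation theorem for Laurent continued fractions: if $\deg Q<s_j$ and $Q\ne0$, then $\|Q\alpha\|\ge\|Q_{j-1}\alpha\|=q^{-s_j}$. Taking the index one lower, any $Q$ with $\deg Q<s_{j-1}$ has $\|Q\alpha\|\ge q^{-s_{j-1}}$, so $c(Q\alpha)\le s_{j-1}-1<n$. Hence every admissible $Q$ has $\deg Q\ge s_{j-1}$, i.e.\ $d(Q)\ge s_{j-1}+1$.

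To prove the best-approximation inequality I would write $Q=aQ_{j-1}+bQ_{j-2}$ with polynomial $a,b$, which is possible since the determinant is $\pm1$. Then $\deg Q<s_j$ forces the two terms $a(Q_{j-1}\alpha-P_{j-1})$ and $b(Q_{j-2}\alpha-P_{j-2})$ to have distinct absolute values. The ultrametric inequality then gives the bound. This is standard, and I would cite \cite{deMathan,Schmidt} rather than reprove it. It is the only nontrivial step.

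\textbf{Step 3: consequences.} The value $s_{j-1}+1$ is attained exactly on $[s_{j-1},s_j-1]$, and these intervals are nonempty because $\deg A_j\ge1$. So the distinct values are $d_j^{(1)}=s_{j-1}+1$, and the plateau lengths are $\deg A_j$. A jump $L_1(n+1)>L_1(n)$ occurs exactly when $n=s_j-1$ for some $j$, which is the stated formula for $J_n^{(1)}$.

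\textbf{Step 4: the metric law.} The Artin map $T\alpha=\{1/\alpha\}$ preserves Haar measure on $\pinf$. The cylinder $\{A_1=a_1,\dots,A_k=a_k\}$ is a ball of measure
\[
 q^{-2\sum_i\deg a_i}.
\]
This follows from $\alpha$ being in a ball of radius $q^{-2s_k}$ around $P_k/Q_k$: indeed $|\alpha-P_k/Q_k|<|Q_k|^{-2}$, and conversely. The measure depends only on the degrees, so the $A_k$ are iid. For a single $A_1$, there are $(q-1)q^{h}$ polynomials of degree $h$, each cylinder having measure $q^{-2h}$. This gives
\[
 \Prob(\deg A_1=h)=(q-1)q^{-h}.
\]

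Alternatively, the law follows from Theorem~\ref{thm:intro-renewal} with $r=1$. The $J_n$ are iid Bernoulli$(1-q^{-1})$, and plateau lengths are the gaps between successes, hence iid geometric. This is the final "consequently" clause of the proposition.
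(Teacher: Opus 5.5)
Your proposal is correct and follows the paper's proof. The profile comes from the convergent identity $c(Q_{j-1}\alpha)=s_j-1$ together with best approximation, and the metric law comes from the cylinder formula $q^{-2\sum_k\deg B_k}$ together with the count $(q-1)q^h$; you also sketch the classical ingredients that the paper only cites. Your alternative derivation from the $r=1$ renewal theorem is not circular, but it yields only that the degrees $\deg A_j$ are iid geometric. The cylinder argument is still what shows that the partial quotients themselves are iid.
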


\begin{proof}
The standard convergent identities and best-approximation property
give
\[
        c(Q_{j-1}\alpha)=\deg Q_j-1=s_j-1
\]
and, for $j\ge2$,
\[
 0\neq Q,\quad \deg Q<\deg Q_{j-1}
 \quad\Longrightarrow\quad
 c(Q\alpha)\le s_{j-1}-1;
\]
see, for example,
\cite{deMathan,Schmidt,Rosen,VielhaberIsometry}.
Hence $Q_{j-1}$ has the least possible coefficient length at every
depth $s_{j-1}\le n\le s_j-1$.  This proves the formula for $L_1(n)$
and the ensuing identities.

For polynomials $B_1,\ldots,B_m$ of positive degree, let
$\Delta_{B_1,\ldots,B_m}$ be the set of elements of $\pinf$ whose
first $m$ partial quotients are $B_1,\ldots,B_m$.  If $\mu$ denotes
normalized Haar probability measure on $\pinf$, the
continued-fraction cylinder formula is
\[
 \mu(\Delta_{B_1,\ldots,B_m})
 =
 q^{-2\sum_{k=1}^m\deg B_k}
 =
 \prod_{k=1}^m q^{-2\deg B_k};
\]
see \cite{BertheNakada,LertchoosakulNairMetric,
LertchoosakulNairQuantitative}.  It follows that the partial quotients
are iid and that
\[
        \Prob(A_j=B)=q^{-2\deg B}.
\]
There are $(q-1)q^h$ polynomials of degree $h$, which gives the
displayed geometric law.  Equivalently, Vielhaber's degree code is an
iid uniform $\F_q$-sequence whose nonzero-symbol indicators are the
variables $J_n^{(1)}$ above
\cite{NiederreiterProbabilistic,VielhaberIsometry}.
\end{proof}

\begin{theorem}[Attained-length density]
\label{thm:intro-density}
For Haar-almost every
$(\alpha_1,\ldots,\alpha_r)\in\pinf^r$,
\[
 \lim_{D\to\infty}
 \frac1D\#\{j:d_j^{(r)}\le D\}
 =\frac{1-q^{-r}}{r}.
\]
Equivalently,
\[
        \lim_{j\to\infty}\frac{d_j^{(r)}}j
        =
        \frac{r}{1-q^{-r}}
        =
        \frac{rq^r}{q^r-1}.
\]
\end{theorem}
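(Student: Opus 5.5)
We derive the density from Theorem~\ref{thm:intro-renewal} using two facts. The number of distinct attained values up to depth $N$ is a sum of iid Bernoulli variables. The profile grows linearly in depth, with slope governed by the quantity $\mathcal S_n^{(r)}=rn+1-L_r(n)$. Set
\[
 C_N^{(r)}=\#\{j: d_j^{(r)}\le L_r(N)\}=1+\sum_{n=0}^{N-1}J_n^{(r)}.
\]
This holds because $L_r(0)=1=d_1^{(r)}$ and each jump creates exactly one new value. By Theorem~\ref{thm:intro-renewal} and the strong law of large numbers, $C_N^{(r)}/N\to 1-q^{-r}$ almost surely. It then suffices to show $L_r(N)/N\to r$ almost surely. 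Since $L_r$ is a nondecreasing staircase, the counting function in $D$ can be sandwiched: for $L_r(N)\le D<L_r(N+1)$ we have $\#\{j:d_j\le D\}=C_N^{(r)}$. Hence
\[
 \frac{C_N^{(r)}}{L_r(N+1)}\le \frac1D\#\{j:d_j\le D\}\le \frac{C_N^{(r)}}{L_r(N)},
\]
and both bounds tend to $(1-q^{-r})/r$ once $L_r(N)\sim rN$. The equivalent statement about $d_j/j$ follows by inverting the counting function, again using monotonicity.

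For $L_r(N)/N\to r$, the upper bound is deterministic linear algebra. A polynomial of coefficient length $d$ has $d$ free coefficients. Depth $n$ imposes $rn$ linear conditions, so a nonzero solution exists whenever $d>rn$. Thus $L_r(n)\le rn+1$, i.e.\ $\mathcal S_n^{(r)}\ge0$.

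For the lower bound we need $\mathcal S_N^{(r)}=o(N)$ almost surely. The failure event is that some $Q$ of length $d\le (r-\varepsilon)N$ cancels $rN$ coefficients. We bound it by a first-moment count over Haar measure. For a fixed nonzero $Q$, the map $\alpha\mapsto$ (first $N$ negative coefficients of $Q\alpha$) is surjective and linear onto $\F_q^N$; indeed it is triangular in the coefficients of $\alpha$ after shifting by $\deg Q$. Since $\alpha_1,\ldots,\alpha_r$ are independent, the probability that $Q$ works for all $i$ is $q^{-rN}$. There are fewer than $q^{d}$ polynomials of length at most $d$. The union bound therefore gives
\[
 \Prob\bigl(L_r(N)\le d\bigr)\le q^{d-rN}\le q^{-\varepsilon N}.
\]
This is summable in $N$, so Borel--Cantelli gives $\liminf L_r(N)/N\ge r-\varepsilon$ almost surely for every rational $\varepsilon>0$.

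The main obstacle is this lower bound together with the surjectivity claim. Surjectivity needs care because the shift by $\deg Q$ pulls in coefficients $u_j$ with $j$ up to $N+\deg Q$. The leading coefficient of $Q$ makes the coefficient matrix triangular with nonzero diagonal, which resolves this. Everything else is bookkeeping from the renewal theorem.

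Alternatively, the tail bound in Theorem~\ref{thm:jump-clock} gives jumps with uniformly bounded exponential moments. From this one could argue that $\mathcal S_n^{(r)}$ stays tight, but the first-moment argument above is more self-contained. Finally, the a.s.\ infinitude of the list $d_j^{(r)}$ follows from $C_N^{(r)}\to\infty$.
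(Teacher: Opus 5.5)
Your proof is correct and follows the paper's own argument. The paper writes $\cA_r(D)=C^{(r)}_{M_D^{(r)}}$ and combines the strong law for $C_N^{(r)}/N$ with the best-depth slope $M_D^{(r)}/D\to 1/r$, which it obtains from the same first-moment union bound over denominators together with Borel--Cantelli; your sandwich $L_r(N)\le D<L_r(N+1)$ is exactly that identity with $N=M_D^{(r)}$, and your slope $L_r(N)/N\to r$ is the same estimate in the inverse (depth) coordinate, which the paper also notes is equivalent.
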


For $r=2$ the density and mean spacing are
\[
        \lambda_{2,q}=\frac{q^2-1}{2q^2},
        \qquad
        \kappa_{2,q}=\frac{2q^2}{q^2-1}.
\]
For $r=1$ the spacing becomes $q/(q-1)$, exactly the mean degree of a
Haar-random Laurent-series partial quotient from
Proposition~\ref{prop:one-series}; this is the classical
positive-characteristic Khinchin law
\cite{Houndonougbo,BertheNakada,LertchoosakulNairQuantitative}.

\subheading{Exact dualities with joint linear complexity}
Let $\JL_r(N)$ denote the least common recurrence length of the first
$N$ coefficient vectors.  For a nonzero polynomial $Q$, let $m_r(Q)$
be its common cancellation depth, and put
\[
 R_D^{(r)}
 =
 \max_{\substack{Q\ne0\\ d(Q)\le D}}
 \bigl(rm_r(Q)-d(Q)\bigr).
\]
The formal definitions, including the zero-prefix convention for
$\JL_r$, are given in Section~\ref{sec:joint}.

\begin{proposition}[Exact profile--complexity dualities]
\label{prop:intro-dualities}
For every $n\ge0$, if
\[
        d=L_r(n),
        \qquad
        N=n+d-1,
\]
then
\[
        \JL_r(N)=d-1.
\]
If no nonzero polynomial has infinite common cancellation depth, then,
for every $D\ge1$,
\[
 R_D^{(r)}
 =
 \max_{\substack{N\in\Nzero\\ \JL_r(N)+1\le D}}
 \bigl(rN-(r+1)\JL_r(N)-1\bigr).
\]
\end{proposition}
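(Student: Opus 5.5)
The plan is to reduce both identities to a single dictionary between polynomials and recurrences, and then argue in both directions. If $Q=\sum_{i=0}^{d-1}a_it^i$ with $d=d(Q)$, then for $k\ge1$
\[
 [t^{-k}]\,Q\alpha_j=\sum_{i=0}^{d-1}a_i\,u^{(j)}_{k+i}.
\]
Hence ``$Q\alpha_j$ has cancellation depth at least $m$ for every $j$'' says exactly that $Q$ defines a common linear recurrence of length $d-1$. This recurrence holds on the first $m+d-1$ coefficient vectors $\boldsymbol u_1,\dots,\boldsymbol u_{m+d-1}$.

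The first step is to state this dictionary precisely, in the form
\[
 \JL_r(N)=\min\{d(Q)-1:\ Q\neq0,\ m_r(Q)+d(Q)-1\ge N\}.
\]
In the reverse direction, a recurrence of length $\ell$ on the first $N$ vectors yields a nonzero $Q$ with $d(Q)\le\ell+1$ and $m_r(Q)\ge N-\ell$. This is where the zero-prefix convention of Section~\ref{sec:joint} matters. The reversed connection polynomial may have vanishing low-order coefficients, or degree $<\ell$. In the first case one divides out the power of $t$, which only shortens $Q$, and one must check that the cancellation depth is not decreased. In the second case the same coefficient equations still give $m_r(Q)\ge N-\ell$. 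I expect this bookkeeping to be the only real obstacle. Everything else is monotonicity: if $d(Q)$ decreases at fixed $m_r(Q)$, the window constraint only improves.

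For the first identity, take a minimizer $Q$ with $d(Q)=d=L_r(n)$ and $m_r(Q)\ge n$. Then $m_r(Q)+d-1\ge N$, so $\JL_r(N)\le d-1$. Conversely, suppose $\JL_r(N)=\ell<d-1$. The dictionary gives $Q'\neq0$ with $d(Q')\le\ell+1<d$ and
\[
 m_r(Q')\ge N-\ell=n+(d-1-\ell)\ge n+1.
\]
Then $L_r(n)\le d(Q')<d$, a contradiction. Hence $\JL_r(N)=d-1$.

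For the record duality, assume no nonzero $Q$ has infinite common depth, so that $m_r(Q)<\infty$ throughout. Given $Q$ with $d(Q)=d\le D$ and $m=m_r(Q)$, put $N=m+d-1$. Then $\JL_r(N)\le d-1$, so $\JL_r(N)+1\le D$ and
\[
 rN-(r+1)\JL_r(N)-1\ge r(m+d-1)-(r+1)(d-1)-1=rm-d.
\]
This shows $\text{RHS}\ge R_D^{(r)}$. Conversely, given $N$ with $\ell=\JL_r(N)\le D-1$, the dictionary gives $Q$ with $d(Q)\le\ell+1\le D$ and $m_r(Q)\ge N-\ell$. Then
\[
 rm_r(Q)-d(Q)\ge r(N-\ell)-(\ell+1)=rN-(r+1)\ell-1,
\]
so $R_D^{(r)}\ge\text{RHS}$.

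Finally, the left side is a maximum over finitely many $Q$, each with finite $m_r(Q)$ by hypothesis. So $R_D^{(r)}$ is finite, and the inequality just proved shows the right-hand supremum over infinitely many $N$ is bounded. Since both sides take integer values, that supremum is attained. This gives the equality as a genuine maximum.
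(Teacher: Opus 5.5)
Your argument is correct and is essentially the paper's proof. The profile correspondence uses the same contradiction: a recurrence of degree $\ell<d-1$ yields a nonzero element of $\cK_{d-1}^{(r)}(n)$, because its depth is at least $N-\ell\ge n+1$. The record duality uses the same two inequalities, with the same choice $N=m_r(Q)+d(Q)-1$.

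The only real difference is how you show the maximum is attained. The paper proves that the set of eligible $N$ is finite, by a pigeonhole argument in the finite set $V_D$. You instead bound the values above by $R_D^{(r)}<\infty$ and use that they are integers. Both work, and yours reuses the inequality you have just proved.

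One point needs correcting. The bookkeeping obstacle you flag does not exist, and the remedy you sketch for it would fail.

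\begin{itemize}
\item \emph{The case $\deg Q<\ell$ cannot occur.} In the paper's definition of $\JL_r$ the recurrence has $a_\ell\neq0$. So $Q=\sum_{k=0}^{\ell}a_kt^k$ has $d(Q)=\ell+1$ exactly, and the constraint $m_r(Q)+d(Q)-1\ge N$ in your dictionary holds on the nose.
\item \emph{The case $a_0=0$ needs no treatment.} The identity $[t^{-j}]Q\alpha_i=\sum_k a_ku^{(i)}_{j+k}$ holds for every polynomial, including those with $a_0=0$. Hence $m_r(Q)\ge N-\ell$ follows immediately.
\item \emph{Dividing out a power of $t$ is not harmless.} It can strictly lower the cancellation depth. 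For example, take $r=1$ and $\alpha=t^{-1}$. Then $c(t\alpha)=\infty$, but $c(\alpha)=0$. In recurrence terms, the sequence $1,0,0,\ldots$ satisfies $u_{j+1}=0$ for all $j\ge1$ but does not satisfy $u_j=0$.
\end{itemize}

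So the check you planned (that depth does not decrease) would fail. Drop that step and use the recurrence polynomial exactly as given.
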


The first identity is proved in
Proposition~\ref{prop:profile-correspondence}, and the second in
Proposition~\ref{prop:record-duality}.  They give exact coordinate
translations; the slope and logarithm laws obtained from them use
classical joint-linear-complexity inputs.

\subheading{What is new and what is classical}
Here $M_D^{(r)}$ denotes the greatest common
cancellation depth attainable by a denominator of coefficient length
at most $D$.  Formal definitions are given in
Sections~\ref{sec:denominator-profile} and \ref{sec:joint}.
The classical slope from \cite{NiederreiterWang},
\[
        \JL_r(N)\sim\frac{r}{r+1}N
\]
is dual to $M_D^{(r)}\sim D/r$.
For $r=1$, Proposition~\ref{prop:one-series} identifies the renewal,
plateau, binomial, and fluctuation laws with classical continued-
fraction results
\cite{NiederreiterProbabilistic,LertchoosakulNairMetric,
LertchoosakulNairQuantitative,VielhaberIsometry}.

The new statement for $r\ge2$ is that the full residual vectors
revealed immediately after the stopping times $T_n$ are iid uniform
on $\F_q^r$.
Thus the jump indicators of the unique common minimal kernel line
are iid with parameter $1-q^{-r}$, and nonzero residuals carry
independent uniform projective marks.  The exact jump-size clock and
its uniform tail estimate further describe what happens after a
nonzero residual.  Multi-continued-fraction and generalized
Berlekamp--Massey algorithms reveal scalar discrepancies sequentially
and track the full symbol-by-symbol joint-complexity state
\cite{DaiFengYang,VielhaberBDM,VielhaberCanales}.  By contrast, the
present residual is an $r$-vector computed from one common monic
generator, all of whose coordinates use the same pre-update
denominator, and it is sampled only at the random horizons $T_n+1$.
Thus the Bernoulli vector law does not follow by multiplying the
sequential scalar discrepancy laws.  The common-denominator coupling
precludes a reduction to a product of one-series laws and yields the
simultaneous density in Theorem~\ref{thm:intro-density}.

\subheading{Relation with higher-dimensional L\'evy--Khintchine laws}
In the Archimedean setting, Cheung and Chevallier established a
L\'evy--Khintchine theorem for best simultaneous approximations, and
Aggarwal and Ghosh proved effective and central limit refinements
\cite{CheungChevallier,AggarwalGhosh}.  The present equal-depth
ultrametric problem for $r\ge2$ is different in both its
approximation profile and its probability structure.  The product
coefficient model makes the minimal-kernel persistence probability
exactly $q^{-r}$ and
leads to a closed formula for the spacing constant.

\subheading{Organization}
Sections~\ref{sec:local-hankel} and \ref{sec:denominator-profile} introduce
the stacked Hankel kernels and the classical slope input.
Section~\ref{sec:renewal} proves the exact marked renewal theorem and
the jump-size clock, followed by finite-time, central limit, and
iterated-logarithm consequences.
Section~\ref{sec:density} proves the density theorem for attained
minimal denominator lengths.
Section~\ref{sec:joint} gives exact dualities with joint linear
complexity and record slack.

\section{Laurent series and stacked Hankel kernels}
\label{sec:local-hankel}

For
\[
        x=\sum_{j\le N}a_jt^j\in K,
\]
write
\[
        x=\pp{x}+\fp{x},
        \qquad
        \pp{x}\in\F_q[t],
        \quad
        \fp{x}\in\pinf.
\]
For $x\neq0$, put
\[
        \deg x=\max\{j:a_j\neq0\},
        \qquad |x|=q^{\deg x},
\]
and set $|0|=0$.  The distance to the polynomial ring is
\[
        \norm{x}=|\fp{x}|.
\]

\subheading{Additional notation}
The coefficient-extraction notation $[t^m]x$ introduced above is
distinct from the polynomial-part notation $\pp{x}$.  The superscript
$\mathsf T$ denotes transpose.  For random variables,
$\stackrel{\mathrm d}=$ denotes equality in distribution and
$\Longrightarrow$ denotes convergence in distribution.  The symbols
$O(\,\cdot\,)$ and $o(\,\cdot\,)$ have their usual asymptotic meanings,
and $\log_q$ is the logarithm to base $q$.

\begin{definition}[Cancellation length]
If
\[
        \fp{x}=a_1t^{-1}+a_2t^{-2}+\cdots,
\]
define
\[
        c(x)=\max\{n\ge0:a_1=\cdots=a_n=0\}.
\]
If $\fp{x}=0$, set $c(x)=\infty$.
\end{definition}

Thus
\[
        c(x)\ge n
        \quad\Longleftrightarrow\quad
        \norm{x}\le q^{-(n+1)}.
\]
For $\boldsymbol\alpha=(\alpha_1,\ldots,\alpha_r)\in\pinf^r$ and
$0\neq Q\in\F_q[t]$, put
\[
        m_r(Q)=\min_{1\le i\le r}c(Q\alpha_i).
\]

Let $V_0=\{0\}$, and for $d\ge 1$, let
\(
        V_d=\{Q\in\F_q[t]:\deg Q<d\}
\).
The elements of 
\[
 \mathbb P(V_d)
 =
 (V_d\setminus\{0\})/\,\F_q^\times;
\]
are called \emph{projective denominator lines}.
Write
\[
        Q(t)=\theta_0+\cdots+\theta_{d-1}t^{d-1},
        \qquad
        \coef(Q)=(\theta_0,\ldots,\theta_{d-1})^{\mathsf T}.
\]

For
\[
        \alpha_i=\sum_{j\ge1}u_j^{(i)}t^{-j},
\]
the coefficient of $t^{-a}$ in $Q\alpha_i$ is
\[
        \sum_{k=0}^{d-1}\theta_ku_{a+k}^{(i)}.
\]
For $n\ge0$, let $H_{\boldsymbol\alpha}^{(r)}(n;d)$ be the
$rn\times d$ matrix obtained by stacking, for $1\le i\le r$, the
Hankel blocks
\[
 \begin{pmatrix}
 u_1^{(i)}&u_2^{(i)}&\cdots&u_d^{(i)}\\
 u_2^{(i)}&u_3^{(i)}&\cdots&u_{d+1}^{(i)}\\
 \vdots&\vdots&&\vdots\\
 u_n^{(i)}&u_{n+1}^{(i)}&\cdots&u_{n+d-1}^{(i)}
 \end{pmatrix}.
\]
For $n=0$ this is the empty matrix with $d$ columns.  Define
\[
        \cK_d^{(r)}(n)
        =
        \ker H_{\boldsymbol\alpha}^{(r)}(n;d)
        \subseteq V_d,
        \qquad
        \cK_0^{(r)}(n)=\{0\}.
\]

\begin{lemma}[Kernel criterion]\label{lem:kernel}
For $0\neq Q\in V_d$,
\[
        Q\in\cK_d^{(r)}(n)
        \quad\Longleftrightarrow\quad
        m_r(Q)\ge n.
\]
\end{lemma}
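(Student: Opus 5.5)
The plan is to unwind both sides into statements about individual coefficients and check that they describe the same finite set of linear equations. The only preliminary needed is the coefficient formula stated just before the lemma. For $Q=\theta_0+\cdots+\theta_{d-1}t^{d-1}\in V_d$ and $a\ge1$, it gives
\[
 [t^{-a}](Q\alpha_i)=\sum_{k=0}^{d-1}\theta_k u^{(i)}_{a+k}.
\]
I would briefly justify this from $t^k\cdot u^{(i)}_jt^{-j}=u^{(i)}_jt^{k-j}$. The terms landing on $t^{-a}$ are exactly those with $j=a+k$, and since $a\ge1$ and $k\ge0$ we always have $j\ge1$, so no boundary terms are lost.

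Next I unpack the right-hand side. Since $Q\alpha_i\in K$, its negative coefficients are the coefficients of $\fp{Q\alpha_i}$. By the definition of cancellation length (including $c=\infty$ when $\fp{Q\alpha_i}=0$), we have $c(Q\alpha_i)\ge n$ if and only if $[t^{-a}](Q\alpha_i)=0$ for all $1\le a\le n$. Taking the minimum over $i$, the condition $m_r(Q)\ge n$ is equivalent to the vanishing of all $rn$ coefficients $[t^{-a}](Q\alpha_i)$ with $1\le i\le r$ and $1\le a\le n$.

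Then I unpack the left-hand side. The row of $H^{(r)}_{\boldsymbol\alpha}(n;d)$ indexed by block $i$ and row $a$ is $(u^{(i)}_a,\ldots,u^{(i)}_{a+d-1})$. Its product with $\coef(Q)$ is exactly $\sum_k\theta_k u^{(i)}_{a+k}=[t^{-a}](Q\alpha_i)$. So $Q\in\cK^{(r)}_d(n)$ says that these same $rn$ numbers vanish, which matches the previous step and proves the equivalence.

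There are two edge cases to dispose of. When $n=0$ the matrix is empty, so the kernel is all of $V_d$, and $m_r(Q)\ge0$ holds trivially. When $d=0$, $V_0=\{0\}$ contains no nonzero $Q$, so the statement is vacuous. No step is a real obstacle; the only point needing care is the index bookkeeping in the coefficient formula, in particular that only coefficients $u_j$ with $j\ge1$ appear.
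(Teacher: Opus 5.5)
Your proposal is correct and takes the same approach as the paper: the paper's proof is the single observation that $H_{\boldsymbol\alpha}^{(r)}(n;d)\coef(Q)=0$ says exactly that the coefficients of $t^{-1},\ldots,t^{-n}$ in every $Q\alpha_i$ vanish. Your version spells out the same identification row by row, verifies the coefficient formula, and handles the $n=0$ and $d=0$ edge cases.
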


\begin{proof}
The matrix equation
$H_{\boldsymbol\alpha}^{(r)}(n;d)\coef(Q)=0$ says exactly that the
coefficients of $t^{-1},\ldots,t^{-n}$ in every $Q\alpha_i$ vanish.
\end{proof}

The kernels satisfy
\[
        \cK_d^{(r)}(n+1)\subseteq\cK_d^{(r)}(n),
        \qquad
        \cK_{d-1}^{(r)}(n)\subseteq\cK_d^{(r)}(n).
\]

\section{Minimal common-denominator profiles and the slope input}
\label{sec:denominator-profile}

\begin{definition}[Minimal denominator profile and best depth]
For $n\ge0$, define
\[
        L_r(n)
        =
        \min\{d\ge1:\cK_d^{(r)}(n)\neq0\}.
\]
For $D\ge1$, define
\[
        M_D^{(r)}
        =
        \max\{m_r(Q):0\neq Q\in V_D\}.
\]
The second quantity is allowed to be $\infty$ on the exceptional set
where one denominator gives exact cancellation in all coordinates.
\end{definition}

The function $L_r(n)$, called the minimal common-denominator length
profile, is nondecreasing and finite: the matrix
$H_{\boldsymbol\alpha}^{(r)}(n;rn+1)$ has more columns than rows.
For a fixed depth $n$, the feasible coefficient lengths are precisely
the integers $d\ge L_r(n)$.  Thus the staircase
\[
 \bigl\{(n,L_r(n)):n\in\Nzero\bigr\}
\]
is the lower feasibility boundary of the region where the stacked
Hankel kernel is nonzero.  Figures~\ref{fig:stopping-time-update} and
\ref{fig:profile-shear} depict this boundary schematically as a
frontier; here ``frontier'' is only a visual shorthand for the graph
of the profile.
Whenever $M_D^{(r)}<\infty$,
\[
        M_D^{(r)}
        =
        \max\{n:L_r(n)\le D\}.
\]

\begin{lemma}[The minimal kernel is a line]\label{lem:minimal-line}
For every $n\ge0$, if $D=L_r(n)$, then
\[
        \dim_{\F_q}\cK_D^{(r)}(n)=1.
\]
\end{lemma}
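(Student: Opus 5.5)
The approach is a shift argument. Two independent elements of the minimal kernel combine to give a nonzero element of strictly shorter coefficient length in the same kernel, which contradicts minimality of $D=L_r(n)$.

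First dispose of the degenerate case. For $n=0$ the kernel $\cK_1^{(r)}(0)=V_1$ consists of the constants, so it is one-dimensional and $L_r(0)=1$. So assume $n\ge1$, and suppose for contradiction that $\dim\cK_D^{(r)}(n)\ge2$.

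The key step is to show that a two-dimensional subspace of $V_D$ contains a nonzero element of degree at most $D-2$. Choose $Q_1,Q_2\in\cK_D^{(r)}(n)$ linearly independent. If either already has degree $<D-1$, keep it. Otherwise both have exact degree $D-1$, and a suitable linear combination $Q=aQ_1+bQ_2$ kills the $t^{D-1}$ coefficient. This $Q$ is nonzero by independence and satisfies $\deg Q\le D-2$, so $Q\in V_{D-1}$. Since the kernel is a subspace, $Q\in\cK_D^{(r)}(n)$.

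Next translate membership into a cancellation statement. Since $Q\ne0$ and $Q\in\cK_D^{(r)}(n)$, Lemma~\ref{lem:kernel} gives $m_r(Q)\ge n$. The same lemma, applied with $d=D-1$, characterizes membership in $\cK_{D-1}^{(r)}(n)$ purely through $m_r(Q)\ge n$ for $Q\in V_{D-1}$. Hence $Q\in\cK_{D-1}^{(r)}(n)$.

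There is one point to watch. If $D=1$, then $V_0=\{0\}$, so a nonzero element of degree $\le D-2$ cannot exist. This is consistent, because $V_1$ itself is one-dimensional and the dimension claim is automatic in that case. For $D\ge2$ we have $D-1\ge1$, so $\cK_{D-1}^{(r)}(n)\ne0$ means exactly that $D-1$ is feasible in the definition of $L_r(n)$. This gives $L_r(n)\le D-1$, contradicting $D=L_r(n)$. Therefore $\dim\cK_D^{(r)}(n)=1$; nonvanishing itself holds by the definition of $L_r(n)$.

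There is no real obstacle. The only care needed is that the leading-coefficient elimination produces a nonzero polynomial, and independence guarantees this.
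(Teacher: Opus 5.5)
Your proof is correct and is essentially the paper's argument: you eliminate the $t^{D-1}$ coefficient from two independent kernel elements to get a nonzero element of $\cK_{D-1}^{(r)}(n)$, which contradicts the minimality of $D=L_r(n)$. Your separate treatment of $n=0$ and $D=1$, and your use of Lemma~\ref{lem:kernel} to identify $\cK_D^{(r)}(n)\cap V_{D-1}$ with $\cK_{D-1}^{(r)}(n)$, only make explicit steps that the paper leaves implicit.
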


\begin{proof}
Minimality gives
\[
        \cK_{D-1}^{(r)}(n)=0,
        \qquad
        \cK_D^{(r)}(n)\neq0.
\]
If the latter kernel had dimension at least two, a nontrivial linear
combination of two independent elements could be chosen with zero
$t^{D-1}$ coefficient.  It would then be a nonzero element of
$\cK_{D-1}^{(r)}(n)$, a contradiction.
\end{proof}

\begin{definition}[Attained minimal denominator lengths]
The attained minimal denominator lengths are the distinct values of $L_r(n)$,
listed in increasing order.  The list may be finite on an exceptional
system and is denoted
\[
        d_1^{(r)}<d_2^{(r)}<\cdots
\]
whenever it is infinite.
\end{definition}

The attained minimal denominator lengths are equivalently the jump points of
$D\mapsto M_D^{(r)}$.  Indeed,
\[
        D=L_r(n)\text{ for some }n
        \quad\Longleftrightarrow\quad
        M_D^{(r)}>M_{D-1}^{(r)},
\]
with the convention $M_0^{(r)}=-\infty$.
To verify the equivalence, note that
$M_D^{(r)}\ge n$ exactly when $L_r(n)\le D$.  Thus
$L_r(n)=D$ implies
$M_{D-1}^{(r)}<n\le M_D^{(r)}$.  Conversely, if
$M_D^{(r)}>M_{D-1}^{(r)}$, any integer
$n$ with $M_{D-1}^{(r)}<n\le M_D^{(r)}$ satisfies $L_r(n)=D$; when
$M_D^{(r)}=\infty$, choose any $n>M_{D-1}^{(r)}$.

\begin{proposition}[Fixed-denominator cancellation]\label{prop:fixedQ}
For a fixed $0\neq Q\in\F_q[t]$ and Haar-random
$\boldsymbol\alpha\in\pinf^r$,
\[
        \Prob(m_r(Q)\ge n)=q^{-rn}.
\]
\end{proposition}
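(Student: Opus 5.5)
Fix $0\neq Q=\theta_0+\cdots+\theta_{d-1}t^{d-1}$ with $\theta_{d-1}\neq0$, so $d=d(Q)$. By Lemma~\ref{lem:kernel}, the event $\{m_r(Q)\ge n\}$ is the event that $H^{(r)}_{\boldsymbol\alpha}(n;d)\coef(Q)=0$. This is a system of $rn$ scalar equations, one for each pair $(i,a)$ with $1\le i\le r$ and $1\le a\le n$:
\[
 \sum_{k=0}^{d-1}\theta_k u^{(i)}_{a+k}=0 .
\]
Since the coordinates $\alpha_1,\ldots,\alpha_r$ are independent and the $i$th block of equations involves only the coefficients $u^{(i)}_j$, the probability factors over $i$. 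It therefore suffices to treat $r=1$ and show that a single Haar-random $\alpha\in\pinf$ satisfies $\Prob(c(Q\alpha)\ge n)=q^{-n}$.

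For one series, consider the map
\[
 \Phi:(u_1,\ldots,u_{n+d-1})\mapsto (w_1,\ldots,w_n),
 \qquad
 w_a=\sum_{k=0}^{d-1}\theta_k u_{a+k}.
\]
This is a linear map $\F_q^{n+d-1}\to\F_q^n$. I would show it is surjective by a triangularity argument. The coordinate $w_a$ involves $u_{a+d-1}$ with the nonzero coefficient $\theta_{d-1}$, and no $w_{a'}$ with $a'<a$ involves $u_{a+d-1}$. Hence the $n$ rows of the matrix of $\Phi$ restricted to the columns $d,\ldots,n+d-1$ form a triangular matrix with diagonal entries $\theta_{d-1}\neq0$. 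So $\Phi$ has rank $n$.

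The vector $(u_1,\ldots,u_{n+d-1})$ is uniform on $\F_q^{n+d-1}$. The image of a uniform vector under a surjective linear map is uniform, since every fibre is a coset of the kernel and so has the same size. Thus $(w_1,\ldots,w_n)$ is uniform on $\F_q^n$, and $\Prob(w=0)=q^{-n}$.

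Taking the product over the $r$ independent coordinates gives $q^{-rn}$. Equivalently, and without splitting into coordinates, the whole map from the first $n+d-1$ coefficient vectors $\boldsymbol u_j\in\F_q^r$ to $\F_q^{rn}$ is a surjective linear map, so its image is uniform. For $n=0$ the statement is trivial.

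There is no real obstacle. The only point needing care is the surjectivity, and that comes from using the leading coefficient $\theta_{d-1}$ rather than $\theta_0$, which might vanish. This makes the triangular structure run along the new variables $u_{a+d-1}$. Alternatively one could pivot on the lowest nonzero coefficient of $Q$; either choice works.
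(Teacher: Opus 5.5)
Your proposal is correct and follows the paper's proof. In both, pivoting on the leading coefficient $\theta_{d-1}$ makes the $n$ equations for each coordinate triangular in $u^{(i)}_{d},\ldots,u^{(i)}_{n+d-1}$, so the residual vector is uniform on $\F_q^n$ and the probability $q^{-n}$ is multiplied over the $r$ independent coordinates; your surjectivity-and-fibre argument just spells out the uniformity step that the paper states in one line.
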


\begin{proof}
Write
\[
        Q(t)=\theta_0+\cdots+\theta_\ell t^\ell,
        \qquad \theta_\ell\neq0.
\]
For a fixed coordinate $i$, the $n$ cancellation equations are
triangular in the successive pivot variables
\[
        u_{\ell+1}^{(i)},\ldots,u_{\ell+n}^{(i)}.
\]
Their residual vector is therefore uniform on $\F_q^n$, and the
probability of its vanishing is $q^{-n}$.  The $r$ coefficient
sequences are independent, giving $q^{-rn}$.
\end{proof}

\begin{proposition}[Best-depth slope]\label{prop:slope}
For Haar-almost every $\boldsymbol\alpha\in\pinf^r$,
\[
        \lim_{D\to\infty}\frac{M_D^{(r)}}D=\frac1r.
\]
\end{proposition}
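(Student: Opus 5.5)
Since $M_D^{(r)}\ge n$ exactly when $L_r(n)\le D$, I will prove separate almost-sure upper and lower bounds on $M_D^{(r)}$. The upper bound comes from a first-moment (Borel--Cantelli) argument using Proposition~\ref{prop:fixedQ}. The lower bound comes from the deterministic column-counting observation already recorded after the definition of $L_r(n)$.

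\emph{Lower bound.} The matrix $H_{\boldsymbol\alpha}^{(r)}(n;rn+1)$ has $rn$ rows and $rn+1$ columns, so its kernel is nonzero. Hence $L_r(n)\le rn+1$ for every $n$ and every $\boldsymbol\alpha$. Given $D\ge1$, set $n=\lfloor (D-1)/r\rfloor$. Then $L_r(n)\le D$, so $M_D^{(r)}\ge \lfloor (D-1)/r\rfloor$, which gives $\liminf_D M_D^{(r)}/D\ge 1/r$ surely.

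\emph{Upper bound.} Fix $\varepsilon>0$ and let $E_D$ be the event that some nonzero $Q\in V_D$ has $m_r(Q)\ge n_D:=\lceil (1/r+\varepsilon)D\rceil$. Scalar multiples have the same cancellation depth, so it suffices to take a union bound over the $(q^D-1)/(q-1)\le q^D$ projective lines in $V_D$. Proposition~\ref{prop:fixedQ} then gives
\[
 \Prob(E_D)\le q^{D}\,q^{-r n_D}\le q^{D-D-r\varepsilon D}=q^{-r\varepsilon D}.
\]
This is summable in $D$, so by Borel--Cantelli, almost surely $M_D^{(r)}<(1/r+\varepsilon)D$ for all large $D$. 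The same estimate shows that almost surely $M_D^{(r)}<\infty$ for every $D$: for a fixed $Q$, $\Prob(m_r(Q)=\infty)=\lim_n q^{-rn}=0$, and there are countably many $Q$. So the exceptional infinite values cause no trouble. Letting $\varepsilon\downarrow0$ along a countable sequence gives $\limsup_D M_D^{(r)}/D\le 1/r$ almost surely.

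Combining the two bounds gives the limit $1/r$. The only delicate point is the counting in the union bound. The number of candidate lines, about $q^{D}$, must be beaten by the fixed-$Q$ probability $q^{-rn}$ with $rn$ just above $D$, and this balance is exactly what produces the slope $1/r$.
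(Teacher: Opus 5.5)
Your proof is correct and follows essentially the same route as the paper. The lower bound is the same deterministic column count $M_D^{(r)}\ge\lfloor (D-1)/r\rfloor$, and the upper bound is the same first-moment bound combined with Borel--Cantelli and a countable intersection over $\varepsilon$. The only differences are cosmetic. The paper applies Markov's inequality to $\#\cK_D^{(r)}(n_D)-1$, which counts all nonzero kernel vectors and gives $(q^D-1)q^{-rn_D}$, whereas you take a union bound over projective lines. Your separate check that $M_D^{(r)}<\infty$ is correct but redundant: $M_D^{(r)}$ is nondecreasing in $D$, so once $E_D$ fails for all large $D$, finiteness for every $D$ follows.
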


\begin{proof}
If $rn<D$, then $H_{\boldsymbol\alpha}^{(r)}(n;D)$ has fewer rows
than columns.  Hence
\[
        M_D^{(r)}
        \ge
        \left\lfloor\frac{D-1}{r}\right\rfloor.
\]

For the upper bound, the fixed-denominator law gives
\[
 \E\bigl(\#\cK_D^{(r)}(n)-1\bigr)
 =
 (q^D-1)q^{-rn}.
\]
For fixed $\varepsilon>0$, put
\[
        n_D=\left\lceil\left(\frac1r+\varepsilon\right)D\right\rceil.
\]
Then Markov's inequality gives
\[
 \Prob(M_D^{(r)}\ge n_D)
 \le
 \E\bigl(\#\cK_D^{(r)}(n_D)-1\bigr)
 \le q^{-r\varepsilon D}.
\]
The last inequality follows from
$rn_D\ge(1+r\varepsilon)D$.  Since the resulting geometric series is
summable in $D$, Borel--Cantelli gives
$M_D^{(r)}<n_D$ for all sufficiently large $D$, almost surely.
Intersecting these full-measure events over positive rational
$\varepsilon$ and combining with the deterministic lower bound proves
the stated limit.
\end{proof}

\begin{remark}
Proposition~\ref{prop:slope} is the inverse-coordinate form of the
classical typical profile slope
$\JL_r(N)/N\to r/(r+1)$ from joint linear complexity
\cite{NiederreiterWang}.
\end{remark}

\section{Exact renewal from fresh residuals at stopping times}
\label{sec:renewal}

Let
\[
        D_n=L_r(n).
\]
By Lemma~\ref{lem:minimal-line}, $\cK_{D_n}^{(r)}(n)$ has a unique
monic generator
\[
        Q_n(t)
        =
        t^{D_n-1}
        +\sum_{k=0}^{D_n-2}\theta_{n,k}t^k.
\]
Let
\[
 \mathcal F_N
 =
 \sigma\bigl(
 u_j^{(i)}:
 1\le i\le r,\ 1\le j\le N
 \bigr),
\]
so $(\mathcal F_N)_{N\ge0}$ is the coefficient filtration.  A
$\Nzero$-valued random variable $T$ is a stopping time if
$\{T\le N\}\in\mathcal F_N$ for every $N$.  Its stopped
sigma-algebra is
\[
 \mathcal F_T
 =
 \{A:A\cap\{T\le N\}\in\mathcal F_N
       \text{ for every }N\ge0\}.
\]
Now set
\[
        T_n=n+D_n-1.
\]
The random index $T_n$ is the earliest coefficient horizon at which a
common denominator of cancellation depth $n$ exists.  The next lemma
shows that $T_n$ is a stopping time and that the corresponding monic
minimal denominator $Q_n$ is $\mathcal F_{T_n}$-measurable.  Thus
$\boldsymbol u_{T_n+1}$ is the first fresh coefficient vector beyond
the stopped sigma-algebra.

\begin{figure}[!htbp]
\centering
\begin{tikzpicture}[
  x=1cm,
  y=0.72cm,
  >=Latex,
  every node/.style={font=\small},
  redlabel/.style={
    draw=freshred!55,
    text=freshred,
    fill=white,
    rounded corners=2pt,
    inner sep=3pt,
    align=left
  },
  greenlabel/.style={
    draw=survivegreen!55,
    text=survivegreen,
    fill=white,
    rounded corners=2pt,
    inner sep=3pt,
    align=left
  }
]
  \draw[->] (-0.15,0) -- (13.15,0)
    node[right] {depth};
  \draw[->] (0,-0.15) -- (0,6.25)
    node[above] {coefficient length};
  \node[below=2pt] at (5.7,0) {$n$};
  \node[below=2pt] at (7.25,0) {$n+1$};
  \node[left=3pt] at (0,2.6) {$D_n$};

  \draw[gray!55,densely dotted]
    (7.25,0.15) -- (7.25,4.35);

  \draw[profileblue,very thick]
    (0,0.85) -- (2.0,0.85) -- (2.0,1.65)
    -- (3.9,1.65) -- (3.9,2.6) -- (5.7,2.6);
  \coordinate (profilepoint) at (5.7,2.6);
  \fill[profileblue] (profilepoint) circle (2.2pt);
  \node[
    profileblue,
    fill=white,
    inner sep=1.5pt,
    below left=4pt and 5pt of profilepoint
  ]
    {$(n,D_n)$};

  \draw[survivegreen,very thick,dashed,->]
    (profilepoint) -- (12.65,2.6);
  \node[greenlabel] at (9.65,1.55)
    {$\rho_n^{(r)}=0$: the minimal kernel persists\\
     $J_n^{(r)}=0$};

  \draw[freshred,very thick,dashed,->]
    (profilepoint) -- (7.25,4.35) -- (12.65,4.35);
  \node[redlabel] at (10.05,5.35)
    {$\rho_n^{(r)}\ne0$: the profile jumps\\
     $J_n^{(r)}=1$};

  \node[redlabel,align=center] (fresh) at (3.0,5.05)
    {fresh coefficient vector $\boldsymbol u_{T_n+1}$\\
     $T_n=n+D_n-1$};
  \draw[freshred,->,thick]
    (fresh.south east) to[out=-12,in=145] (profilepoint);
\end{tikzpicture}
\caption{A schematic frontier picture of the one-step update after
the stopping time $T_n$.  The solid blue staircase represents the
minimal common-denominator length profile already determined through
coefficient index $T_n$; the fresh vector
at $T_n+1$ selects one of the two dashed continuations.  The diagram is
schematic: a nonzero residual may produce a jump larger than one in
coefficient length.}
\label{fig:stopping-time-update}
\end{figure}
\FloatBarrier

\begin{lemma}[The stopping-time property]\label{lem:stopping-time-property}
For every fixed $n$, $T_n$ is a stopping time for
$(\mathcal F_N)_{N\ge0}$, and $Q_n$ is
$\mathcal F_{T_n}$-measurable.
\end{lemma}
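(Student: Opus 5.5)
The plan is to use the observation that the matrix $H_{\boldsymbol\alpha}^{(r)}(n;d)$ involves only the coefficient vectors $\boldsymbol u_1,\ldots,\boldsymbol u_{n+d-1}$. Indeed, its entries are $u_{a+k}^{(i)}$ with $1\le a\le n$ and $0\le k\le d-1$, so the largest index is $n+d-1$. Hence, for fixed $n$ and $d$, the kernel $\cK_d^{(r)}(n)$ is a measurable function of $\boldsymbol u_1,\ldots,\boldsymbol u_{n+d-1}$, that is, $\mathcal F_{n+d-1}$-measurable as a random subspace of $V_d$. Note that $V_d$ is finite, so measurability of a random subspace just means each event $\{\cK_d^{(r)}(n)=W\}$ is measurable.

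\emph{Stopping-time property.} For fixed $n$ and $N\ge0$, I will write
\[
\{T_n\le N\}=\{D_n\le N-n+1\}=\{\cK_{N-n+1}^{(r)}(n)\neq0\}.
\]
The second equality uses that feasibility is monotone in $d$, via $\cK_{d-1}^{(r)}(n)\subseteq\cK_d^{(r)}(n)$. When $N-n+1\le0$ the event is empty. Otherwise the event depends only on $\boldsymbol u_1,\ldots,\boldsymbol u_N$, since here $n+d-1=N$. So $\{T_n\le N\}\in\mathcal F_N$. Along the way I note that $D_n$ is finite, since $D_n\le rn+1$, so $T_n$ is $\Nzero$-valued.

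\emph{Measurability of $Q_n$.} I view $Q_n$ as a random element of the countable set $\F_q[t]$. For a fixed monic $P$ of coefficient length $d$ and any $N$, I would show
\[
\{Q_n=P\}\cap\{T_n\le N\}=\{D_n=d\}\cap\{P\in\cK_d^{(r)}(n)\}\cap\{n+d-1\le N\}.
\]
This holds because on $\{D_n=d\}$ the kernel is a line (Lemma~\ref{lem:minimal-line}) and $Q_n$ is its unique monic generator. Here $\{D_n=d\}=\{\cK_d^{(r)}(n)\neq0\}\cap\{\cK_{d-1}^{(r)}(n)=0\}$ lies in $\mathcal F_{n+d-1}$, and Lemma~\ref{lem:kernel} makes $\{P\in\cK_d^{(r)}(n)\}$ an $\mathcal F_{n+d-1}$ event. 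When $n+d-1\le N$ the whole intersection is in $\mathcal F_N$; otherwise it is empty. Hence $\{Q_n=P\}\in\mathcal F_{T_n}$ for every $P$, which gives the $\mathcal F_{T_n}$-measurability of $Q_n$.

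There is no serious obstacle here. The only care needed is bookkeeping: the index bound $n+d-1$ for the Hankel entries, the degenerate case $n=0$ (where $D_0=1$, $Q_0=1$ and $T_0=0$), and verifying the definition of $\mathcal F_{T_n}$ through intersections with $\{T_n\le N\}$ rather than merely conditioning on $\{T_n=N\}$.
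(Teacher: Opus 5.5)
Your proposal is correct and follows essentially the paper's argument: the same identity $\{T_n\le N\}=\{\cK_{N-n+1}^{(r)}(n)\neq0\}$ for the stopping-time property, and the same check of $\mathcal F_{T_n}$-measurability through intersections with $\{T_n\le N\}$. Your identity $\{Q_n=P\}\cap\{T_n\le N\}=\{D_n=d\}\cap\{P\in\cK_d^{(r)}(n)\}\cap\{n+d-1\le N\}$ just writes out explicitly what the paper obtains by taking the union of $\{Q_n=Q\}\cap\{T_n=m\}$ over $m\le N$.
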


\begin{proof}
For $N<n$, the event $\{T_n\le N\}$ is empty.  For $N\ge n$,
\[
 \{T_n\le N\}
 =
 \{L_r(n)\le N-n+1\}
 =
 \{\cK_{N-n+1}^{(r)}(n)\neq0\}.
\]
The last event is determined by coefficient vectors of indices at
most $N$, proving the stopping-time assertion.  On $\{T_n=N\}$, the
one-dimensional minimal kernel and its monic generator are likewise
determined by $\mathcal F_N$.  More explicitly, for every monic
polynomial $Q$ and every $N$,
\[
        \{Q_n=Q\}\cap\{T_n=N\}\in\mathcal F_N.
\]
Consequently,
\[
 \{Q_n=Q\}\cap\{T_n\le N\}
 =
 \bigcup_{m=0}^{N}
 \bigl(\{Q_n=Q\}\cap\{T_n=m\}\bigr)
 \in\mathcal F_N,
\]
which is precisely the $\mathcal F_{T_n}$-measurability of the
countably valued random variable $Q_n$.
\end{proof}

Define the next residual vector
\[
 \rho_n^{(r)}
 =
 \bigl(
 [t^{-(n+1)}]Q_n\alpha_1,\ldots,
 [t^{-(n+1)}]Q_n\alpha_r
 \bigr)
 \in\F_q^r.
\]

On the event $\{D_n=d\}$, the newly appended Hankel rows have the
following form; the entries in the last column constitute the
coefficient vector $\boldsymbol u_{T_n+1}=\boldsymbol u_{n+d}$.

\begin{figure}[!htbp]
\centering
\begin{tikzpicture}[
  >=Latex,
  every node/.style={font=\small}
]
  \matrix (newrows) [
    matrix of math nodes,
    nodes={
      draw=gray!65,
      minimum height=8mm,
      minimum width=22mm,
      text height=2.4ex,
      text depth=1.0ex,
      inner sep=1.5pt,
      outer sep=0pt,
      anchor=center
    },
    column sep=-\pgflinewidth,
    row sep=1.6mm,
    column 1/.style={nodes={fill=knownfill}},
    column 2/.style={nodes={fill=knownfill}},
    column 3/.style={nodes={fill=knownfill}},
    column 4/.style={nodes={fill=freshfill,draw=freshred}}
  ] {
    u_{n+1}^{(1)} & \cdots & u_{n+d-1}^{(1)}
      & u_{n+d}^{(1)}\\
    \vdots & & \vdots & \vdots\\
    u_{n+1}^{(r)} & \cdots & u_{n+d-1}^{(r)}
      & u_{n+d}^{(r)}\\
  };

  \node[above=2mm of newrows]
    {new rows added when the depth changes from $n$ to $n+1$};

  \draw[
    decorate,
    decoration={brace,mirror,amplitude=5pt}
  ] (newrows-3-1.south west) -- (newrows-3-3.south east)
    node[midway,below=6pt,align=center]
    {$\mathcal F_{T_n}$-measurable\\entries};
  \draw[
    freshred,
    decorate,
    decoration={brace,mirror,amplitude=5pt}
  ] (newrows-3-4.south west) -- (newrows-3-4.south east)
    node[midway,below=6pt,align=center,text=freshred]
    {fresh vector\\at $T_n+1$};

  \node[right=9mm of newrows] (times) {$\displaystyle
    \begin{pmatrix}
      \theta_{n,0}\\[-1mm]
      \vdots\\[-1mm]
      \theta_{n,d-2}\\
      1
    \end{pmatrix}$};
  \node[left=2mm of times] {$\times$};
  \node[right=8mm of times,align=center] (residual)
    {$=\ \rho_n^{(r)}$\\[-1mm]
     \scriptsize(conditionally uniform)};
\end{tikzpicture}
\caption{Only the last entry in each newly appended Hankel row is
outside the stopped sigma-algebra.  Monicity of $Q_n$ makes the
coefficient of this fresh vector equal to one.  When $d=1$, the known
block is empty.}
\label{fig:fresh-hankel-boundary}
\end{figure}
\FloatBarrier

\begin{lemma}[Fresh residual vector]\label{lem:fresh}
For every $a\in\F_q^r$,
\[
 \Prob\bigl(\rho_n^{(r)}=a\mid\mathcal F_{T_n}\bigr)=q^{-r}.
\]
The conditional identity is understood almost surely.
Thus, conditionally on $\mathcal F_{T_n}$, the residual vector is
uniform on $\F_q^r$.  In particular,
\[
 \Prob\bigl(\rho_n^{(r)}=0\mid\mathcal F_{T_n}\bigr)=q^{-r}.
\]
\end{lemma}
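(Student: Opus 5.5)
We decompose along the values of the stopping time and reduce everything to a statement about a single fresh coefficient vector. Fix $a\in\F_q^r$. Since $T_n$ is a stopping time and $Q_n$ is $\mathcal F_{T_n}$-measurable (Lemma~\ref{lem:stopping-time-property}), it suffices to show the following: for every $N\ge n$, every event $A\in\mathcal F_{T_n}$ and every monic $Q$ of degree $d-1$ with $d=N-n+1$,
\[
 \Prob\bigl(A\cap\{T_n=N\}\cap\{Q_n=Q\}\cap\{\rho_n^{(r)}=a\}\bigr)
 =q^{-r}\,\Prob\bigl(A\cap\{T_n=N\}\cap\{Q_n=Q\}\bigr).
\]
Summing over the countably many pairs $(N,Q)$ then gives $\Prob(A\cap\{\rho_n^{(r)}=a\})=q^{-r}\Prob(A)$ for all $A\in\mathcal F_{T_n}$. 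This is exactly the almost-sure conditional identity. The case $a=0$ is the final displayed statement.

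For the identity itself, use the defining property of $\mathcal F_{T_n}$. For $A\in\mathcal F_{T_n}$ the set $A\cap\{T_n=N\}=(A\cap\{T_n\le N\})\setminus(A\cap\{T_n\le N-1\})$ lies in $\mathcal F_N$. By the proof of Lemma~\ref{lem:stopping-time-property}, the same holds for $\{Q_n=Q\}\cap\{T_n=N\}$. Hence
\[
B=A\cap\{T_n=N\}\cap\{Q_n=Q\}\in\mathcal F_N .
\]
On $B$, the coefficient of $t^{-(n+1)}$ in $Q\alpha_i$ is $\sum_{k=0}^{d-1}\theta_k u^{(i)}_{n+1+k}$, as computed in Section~\ref{sec:local-hankel}. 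Because $\theta_{d-1}=1$ by monicity, this equals
\[
\rho_n^{(r)}=\boldsymbol u_{N+1}+w,
\]
where $w=\bigl(\sum_{k=0}^{d-2}\theta_k u^{(i)}_{n+1+k}\bigr)_{i}$ involves only indices at most $n+d-1=N$. Thus $w$ is $\mathcal F_N$-measurable, and on $B$ it is a fixed function of $\boldsymbol u_1,\dots,\boldsymbol u_N$ since $Q$ is fixed.

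Now $\boldsymbol u_{N+1}$ is uniform on $\F_q^r$ and independent of $\mathcal F_N$. Condition on $\mathcal F_N$: on $B$ the event $\{\rho_n^{(r)}=a\}$ becomes $\{\boldsymbol u_{N+1}=a-w\}$, which has conditional probability $q^{-r}$. Integrating over $B$ gives the identity. The edge case $d=1$ is harmless, since then $Q=1$, $w=0$, and $\rho_n^{(r)}=\boldsymbol u_{n+1}$.

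The main obstacle is bookkeeping rather than a hard estimate. Since $N=T_n$ is random, the argument needs two points. First, the event is genuinely in $\mathcal F_N$ and not merely in $\mathcal F_{T_n}$; the decomposition over the values of $T_n$ handles this. Second, only the top coefficient of $Q_n$ touches index $N+1$, and this is where monicity of the generator, and hence Lemma~\ref{lem:minimal-line}, is essential. I would spell out the index check $n+1+(d-1)=T_n+1$ explicitly.
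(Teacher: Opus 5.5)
Your proof is correct and is essentially the paper's argument: split along the values of $T_n$, use monicity so that $\boldsymbol u_{T_n+1}$ enters with coefficient one, and use that $\boldsymbol u_{N+1}$ is independent of $\mathcal F_N$. The only difference is ordering. The paper first shows that $\boldsymbol u_{T_n+1}$ is independent of $\mathcal F_{T_n}$ and uniform, and then adds the $\mathcal F_{T_n}$-measurable sum. You instead also fix $Q_n=Q$ and absorb the shift $w$ inside each $\mathcal F_N$-measurable piece, which makes that last measurability step explicit.
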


\begin{proof}
On $\{D_n=d\}$, monicity gives, for $1\le i\le r$,
\[
 [t^{-(n+1)}]Q_n\alpha_i
 =
 u_{n+d}^{(i)}
 +
 \sum_{k=0}^{d-2}\theta_{n,k}u_{n+1+k}^{(i)}.
\]
The sum is $\mathcal F_{T_n}$-measurable.  To justify the fresh-vector
claim at the random index, let $A\in\mathcal F_{T_n}$ and
$a\in\F_q^r$.  Since
$A\cap\{T_n=N\}\in\mathcal F_N$ and
$\boldsymbol u_{N+1}$ is independent of $\mathcal F_N$,
\[
\begin{aligned}
 \Prob\bigl(A\cap\{\boldsymbol u_{T_n+1}=a\}\bigr)
 &=
 \sum_{N\ge0}
 \Prob\bigl(A\cap\{T_n=N\}\cap\{\boldsymbol u_{N+1}=a\}\bigr)\\
 &=q^{-r}\sum_{N\ge0}\Prob\bigl(A\cap\{T_n=N\}\bigr)
 =q^{-r}\Prob(A).
\end{aligned}
\]
Thus $\boldsymbol u_{T_n+1}$ is independent of
$\mathcal F_{T_n}$ and uniform on $\F_q^r$.  Adding the measurable
sum preserves conditional uniformity and proves the lemma.
\end{proof}

\begin{theorem}[Exact marked renewal]\label{thm:renewal}
The residual vectors
\[
        \rho_0^{(r)},\rho_1^{(r)},\ldots
\]
are iid and uniformly distributed on $\F_q^r$.  Moreover,
\[
        J_n^{(r)}
        =
        \one_{\{L_r(n+1)>L_r(n)\}}
        =
        \one_{\{\rho_n^{(r)}\ne0\}},
        \qquad n\ge0,
\]
so the variables $J_n^{(r)}$ are iid Bernoulli with parameter
\[
        p_r=1-q^{-r}.
\]
\end{theorem}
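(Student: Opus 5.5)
The plan has two parts. First, a deterministic identification of the jump event with $\{\rho_n^{(r)}\neq0\}$. Second, a stopping-time argument showing that the residuals are iid uniform; it combines Lemma~\ref{lem:fresh} with a measurability statement for the earlier residuals. The Bernoulli law then follows immediately, since $\Prob(\rho_n^{(r)}\ne0)=1-q^{-r}$ and functions of independent variables are independent.

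\emph{Deterministic step.} Fix $n$ and put $d=D_n$. By Lemma~\ref{lem:kernel}, $Q_n\in\cK_d^{(r)}(n+1)$ exactly when $m_r(Q_n)\ge n+1$. Since $m_r(Q_n)\ge n$ already holds, this is equivalent to $\rho_n^{(r)}=0$.
\begin{itemize}
\item If $\rho_n^{(r)}=0$, then $L_r(n+1)\le d$. Monotonicity of $L_r$ then gives equality, so $J_n^{(r)}=0$.
\item If $\rho_n^{(r)}\ne0$, suppose $L_r(n+1)=d$. The inclusion $\cK_d^{(r)}(n+1)\subseteq\cK_d^{(r)}(n)$ and Lemma~\ref{lem:minimal-line} show that the nonzero kernel $\cK_d^{(r)}(n+1)$ equals the line $\F_q Q_n$. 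Hence $Q_n\in\cK_d^{(r)}(n+1)$, a contradiction. So $J_n^{(r)}=1$.
\end{itemize}

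\emph{Measurability step.} I will show that $\rho_k^{(r)}$ is $\mathcal F_{T_n}$-measurable for every $k<n$.
\begin{itemize}
\item Since $D_{k+1}\ge D_k$, we have $T_{k+1}=k+D_{k+1}\ge k+D_k=T_k+1$. Hence $T_k+1\le T_n$ for all $k<n$.
\item The random time $T_k+1$ is a stopping time.
\item $\rho_k^{(r)}$ is $\mathcal F_{T_k+1}$-measurable. On $\{T_k=N\}$, the generator $Q_k$ is determined by $\mathcal F_N$ (Lemma~\ref{lem:stopping-time-property}), and the explicit formula in the proof of Lemma~\ref{lem:fresh} uses only coefficient vectors with indices at most $N+1$. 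So $\{\rho_k^{(r)}=a\}\cap\{T_k+1=N+1\}\in\mathcal F_{N+1}$.
\item For ordered stopping times $S\le T$ one has $\mathcal F_S\subseteq\mathcal F_T$. This yields the claim.
\end{itemize}
This bookkeeping with random horizons is the only delicate point, and the inequality $T_k+1\le T_{k+1}$ is what makes it work.

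\emph{Induction.} For $a_0,\dots,a_n\in\F_q^r$, let $A=\{\rho_0^{(r)}=a_0,\ldots,\rho_{n-1}^{(r)}=a_{n-1}\}$. By the measurability step, $A\in\mathcal F_{T_n}$. Lemma~\ref{lem:fresh} then gives
\[
\Prob\bigl(A\cap\{\rho_n^{(r)}=a_n\}\bigr)=\E\bigl[\one_A\,\Prob(\rho_n^{(r)}=a_n\mid\mathcal F_{T_n})\bigr]=q^{-r}\Prob(A).
\]
By induction on $n$, every finite-dimensional joint probability equals $q^{-r(n+1)}$. This is exactly the iid uniform law for $\rho_0^{(r)},\rho_1^{(r)},\ldots$. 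Combined with the deterministic step, it proves that the $J_n^{(r)}$ are iid Bernoulli with parameter $p_r=1-q^{-r}$.
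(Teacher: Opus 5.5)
Your proposal is correct and follows essentially the same route as the paper: the same deterministic dichotomy ($\cK_{D_n}^{(r)}(n+1)$ is $\F_qQ_n$ or $0$ according as $\rho_n^{(r)}=0$ or not), the same inequality $T_k+1\le T_n$ for $k<n$ combined with the inclusion $\mathcal F_{T_k+1}\subseteq\mathcal F_{T_n}$ of stopped $\sigma$-algebras, and the same iteration of Lemma~\ref{lem:fresh}. Your version differs only in spelling out points the paper leaves implicit, notably the $\mathcal F_{T_k+1}$-measurability of $\rho_k^{(r)}$ via the explicit residual formula.
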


\begin{proof}
Since
\[
        \cK_{D_n}^{(r)}(n)=\F_qQ_n,
\]
one has
\[
 \cK_{D_n}^{(r)}(n+1)
 =
 \begin{cases}
 \F_qQ_n,&\rho_n^{(r)}=0,\\
 0,&\rho_n^{(r)}\neq0.
 \end{cases}
\]
Because $L_r(n+1)\ge L_r(n)$,
\[
        J_n^{(r)}
        =
        \one_{\{\rho_n^{(r)}\neq0\}}.
\]

It remains to pass from conditional uniformity to independence.  If
$k<n$, then
$D_k\le D_n$ and
\[
        T_k+1=k+D_k\le n-1+D_n=T_n.
\]
Since $T_k$ is a stopping time, so is $T_k+1$, and
$\rho_k^{(r)}$ is $\mathcal F_{T_k+1}$-measurable.
The standard inclusion of stopped sigma-algebras,
\[
        S\le T\quad\Longrightarrow\quad\mathcal F_S\subseteq\mathcal F_T,
\]
therefore gives
\[
        \sigma(\rho_0^{(r)},\ldots,\rho_{n-1}^{(r)})
        \subseteq\mathcal F_{T_n}.
\]
Lemma~\ref{lem:fresh} now implies, for every $a\in\F_q^r$,
\[
 \Prob\bigl(\rho_n^{(r)}=a\mid
 \rho_0^{(r)},\ldots,\rho_{n-1}^{(r)}\bigr)
 =q^{-r}.
\]
Iteration proves that the residual vectors are iid uniform.  The
Bernoulli assertion follows by applying the nonzero indicator to each
residual vector.
\end{proof}

\begin{corollary}[Plateaux and projective marks]\label{cor:plateaux}
Set $S_0=0$ and, recursively,
\[
 S_j=\inf\{N>S_{j-1}:J_{N-1}^{(r)}=1\},
 \qquad
 H_j=S_j-S_{j-1}.
\]
Here $\inf\varnothing=\infty$; Theorem~\ref{thm:renewal} implies that
all these times are finite almost surely.
The times $S_1,S_2,\ldots$ are the renewal epochs.  The profile
$L_r(n)$ is constant for
$S_{j-1}\le n\le S_j-1$, so $H_j$ is the length of the $j$th
plateau as well as the $j$th inter-renewal time.  The variables
$H_1,H_2,\ldots$ are iid with
\[
        \Prob(H_j=h)
        =
        (1-q^{-r})q^{-r(h-1)},
        \qquad h\ge1.
\]
In particular,
\[
        \E H_j=\frac1{1-q^{-r}}.
\]
If
\[
        \Xi_j=[\rho_{S_j-1}^{(r)}]
        \in\mathbb P^{r-1}(\F_q),
\]
then $\Xi_1,\Xi_2,\ldots$ are iid uniform on
$\mathbb P^{r-1}(\F_q)$ and are independent of the plateau lengths
$(H_j)_{j\ge1}$.  For $r=1$, the projective space has one element.
\end{corollary}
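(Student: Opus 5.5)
The plan is to deduce everything from Theorem~\ref{thm:renewal}, which says that $(\rho_n^{(r)})_{n\ge0}$ is iid uniform on $\F_q^r$ and that $J_n^{(r)}=\one_{\{\rho_n^{(r)}\ne0\}}$.

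\textbf{Finiteness and plateaux.} The indicators $J_n^{(r)}$ are iid Bernoulli with parameter $1-q^{-r}>0$. By Borel--Cantelli, infinitely many of them equal one almost surely, so each $S_j$ is finite almost surely. By definition, $S_j$ is the $j$th index $N\ge1$ with $J_{N-1}^{(r)}=1$. Hence $J_{n}^{(r)}=0$ for $S_{j-1}\le n\le S_j-2$, and $J_{S_j-1}^{(r)}=1$. Since $J_n^{(r)}=0$ means $L_r(n+1)=L_r(n)$, the profile is constant on $S_{j-1}\le n\le S_j-1$ and strictly increases at $n=S_j$. This identifies $H_j$ as the plateau length.

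\textbf{Law of the $H_j$.} The $H_j$ are the successive waiting times between successes in an iid Bernoulli sequence, so they are iid geometric. To see this concretely, the event $\{H_1=h_1,\ldots,H_m=h_m\}$ prescribes the values of $J_0^{(r)},\ldots,J_{h_1+\cdots+h_m-1}^{(r)}$: each block consists of $h_k-1$ zeros followed by a one. Its probability therefore factors as $\prod_k (1-q^{-r})q^{-r(h_k-1)}$. The expectation $\E H_j=1/(1-q^{-r})$ is the mean of this geometric law.

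\textbf{The marks.} We compute the joint law of the $H_j$ and the $\Xi_j$ through cylinder events on the residuals. The event
\[
\{H_1=h_1,\ldots,H_m=h_m,\ \Xi_1=\xi_1,\ldots,\Xi_m=\xi_m\}
\]
is the event that, at each index $n<S_m$, the residual $\rho_n^{(r)}$ lies in a prescribed set. At non-renewal indices that set is $\{0\}$. At the renewal index $S_k-1$ it is the set of $q-1$ nonzero vectors on the line $\xi_k$. By independence and uniformity of the $\rho_n^{(r)}$, the probability is
\[
\prod_{k=1}^m q^{-r(h_k-1)}\cdot\frac{q-1}{q^r}.
\]
Since $\#\mathbb P^{r-1}(\F_q)=(q^r-1)/(q-1)$, this equals
\[
\prod_{k=1}^m (1-q^{-r})q^{-r(h_k-1)}\cdot\frac{q-1}{q^r-1}.
\]
This is the product of the geometric laws of the $H_k$ with the uniform law on $\mathbb P^{r-1}(\F_q)$ for each $\Xi_k$. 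The factorization gives both claims at once: the $\Xi_j$ are iid uniform, and they are independent of $(H_j)_{j\ge1}$. Because these finite-dimensional cylinders determine the joint law, the result extends to the whole sequences.

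\textbf{Main obstacle.} The only delicate point is that the $S_j$ are random indices, so the argument must avoid conditioning at a random time. Writing every event as a finite union of deterministic cylinder events in $(\rho_n^{(r)})$, as above, sidesteps this. For $r=1$ the projective space is a single point, so the statement about marks is trivial.
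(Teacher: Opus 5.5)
Your proposal is correct and follows the paper's route. Both arguments derive everything from the iid uniform residuals of Theorem~\ref{thm:renewal}, and both handle the marks through the fact that a nonzero uniform vector has uniform projective class regardless of the zero--nonzero pattern. Your explicit cylinder computation, giving the joint mass $\prod_{k=1}^m\bigl((1-q^{-r})q^{-r(h_k-1)}\cdot\frac{q-1}{q^r-1}\bigr)$, makes rigorous the factorization that the paper states in a single sentence.
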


\begin{proof}
The times $S_j$ are the renewal epochs of the iid Bernoulli sequence
$(J_n^{(r)})_{n\ge0}$.  Thus the increments $H_j$ are iid
geometric waiting times with success parameter $1-q^{-r}$.
Conditionally on a residual vector being nonzero, it is uniform on
$\F_q^r\setminus\{0\}$.  Every projective line contains exactly
$q-1$ nonzero vectors.  Hence, for every
$\xi\in\mathbb P^{r-1}(\F_q)$,
\[
 \Prob\bigl([\rho_n^{(r)}]=\xi\mid\rho_n^{(r)}\ne0\bigr)
 =
 \frac{q-1}{q^r-1}
 =
 \frac1{\#\mathbb P^{r-1}(\F_q)}.
\]
This conditional distribution does not depend on the zero--nonzero
pattern.  Applying this factorization independently to the iid
residual vectors shows that the successive projective marks are iid
and independent of the waiting times.
\end{proof}

Put
\[
        \Delta_n^{(r)}=D_{n+1}-D_n,
        \qquad
        \mathcal S_n^{(r)}=rn+1-D_n.
\]
The nonnegative integer $\mathcal S_n^{(r)}$ measures how far the
minimal length lies below the dimension bound $rn+1$.  We next refine
the event $\{\Delta_n^{(r)}>0\}$ by following kernel dimensions while
the coefficient length, rather than the depth, is increased.

On $\{D_n=d\}$, define
\[
 B_{n,k}=H_{\boldsymbol\alpha}^{(r)}(n;d+k),
 \qquad
 \nu_{n,k}=\dim\ker B_{n,k}
 \quad(k\ge0).
\]
Lemma~\ref{lem:minimal-line} gives $\nu_{n,0}=1$.  Appending one
column changes the nullity by either zero or one, so for
$1\le j\le r$ the kernel-growth epochs
\[
 \tau_{n,j}
 =
 \inf\{k\ge1:\nu_{n,k}=j+1\}
\]
are well defined and satisfy
\begin{equation}
 \tau_{n,j}\le rn+j+1-d.
 \label{eq:kernel-growth-bound}
\end{equation}
Indeed, the matrix $B_{n,k}$ has $rn$ rows, and hence has nullity at
least $d+k-rn$.

For fixed $n$, put
\[
        \mathcal G_{n,k}=\mathcal F_{T_n+k},
        \qquad k\ge0.
\]
Because $T_n+k$ is a stopping time and these random times increase
with $k$, $(\mathcal G_{n,k})_{k\ge0}$ is a filtration.  For a
stopping time $\sigma$ relative to this filtration, write
$\mathcal G_{n,\sigma}$ for its stopped sigma-algebra.

\begin{auxlemma}[Freshness at shifted stopping times]
Let $\sigma$ be an almost surely finite stopping time for
$(\mathcal G_{n,k})_{k\ge0}$.  Then, for every $a\in\F_q^r$,
\[
 \Prob\bigl(
 \boldsymbol u_{T_n+\sigma+1}=a
 \mid \mathcal G_{n,\sigma}
 \bigr)
 =q^{-r}
 \qquad\text{almost surely}.
\]
\end{auxlemma}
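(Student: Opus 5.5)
The plan is to reduce the statement to the argument already used in Lemma~\ref{lem:fresh}, with the stopping time $T_n$ replaced by the composite random time $T_n+\sigma$. The key observation is that $T_n+\sigma$ is itself a stopping time for the original coefficient filtration $(\mathcal F_N)$, and that $\mathcal G_{n,\sigma}\subseteq\mathcal F_{T_n+\sigma}$. Once this is in place, the decomposition over the values of the stopping time proves freshness exactly as before.

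\textbf{Step 1.} I will show that $T_n+\sigma$ is an $(\mathcal F_N)$-stopping time. For fixed $N$,
\[
\{T_n+\sigma\le N\}=\bigcup_{k\ge0}\bigl(\{\sigma=k\}\cap\{T_n+k\le N\}\bigr).
\]
Here $\{\sigma=k\}\in\mathcal G_{n,k}=\mathcal F_{T_n+k}$. By the definition of the stopped sigma-algebra at the stopping time $T_n+k$, intersecting with $\{T_n+k\le N\}$ gives an element of $\mathcal F_N$. This settles the stopping-time property.

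\textbf{Step 2.} I will show that $\mathcal G_{n,\sigma}\subseteq\mathcal F_{T_n+\sigma}$. Take $A\in\mathcal G_{n,\sigma}$, so $A\cap\{\sigma=k\}\in\mathcal G_{n,k}=\mathcal F_{T_n+k}$ for each $k$. Then
\[
A\cap\{T_n+\sigma\le N\}=\bigcup_k \bigl(A\cap\{\sigma=k\}\bigr)\cap\{T_n+k\le N\},
\]
and each term lies in $\mathcal F_N$ by the same stopped-sigma-algebra argument as in Step~1. I expect this bookkeeping with two layers of stopped sigma-algebras to be the only real obstacle. It is routine, but care is needed because $\mathcal G_{n,k}$ is itself a stopped sigma-algebra rather than one of the $\mathcal F_N$.

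\textbf{Step 3.} This step is the freshness computation, with $\tau=T_n+\sigma$ finite almost surely because $\sigma$ and $T_n$ are finite. For $A\in\mathcal G_{n,\sigma}$ and $a\in\F_q^r$, decompose over $\{\tau=N\}$:
\[
\Prob\bigl(A\cap\{\boldsymbol u_{\tau+1}=a\}\bigr)=\sum_N\Prob\bigl(A\cap\{\tau=N\}\cap\{\boldsymbol u_{N+1}=a\}\bigr).
\]
By Step~2, $A\cap\{\tau=N\}\in\mathcal F_N$. Since $\boldsymbol u_{N+1}$ is independent of $\mathcal F_N$ and uniform on $\F_q^r$, each term factors as $q^{-r}\,\Prob(A\cap\{\tau=N\})$. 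Summing over $N$ gives $q^{-r}\Prob(A)$. As $A\in\mathcal G_{n,\sigma}$ was arbitrary, this is the asserted almost-sure conditional identity.
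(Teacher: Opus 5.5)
Your proposal is correct and follows the paper's proof: you show that $T_n+\sigma$ is an $(\mathcal F_N)$-stopping time, place $\mathcal G_{n,\sigma}$ inside $\mathcal F_{T_n+\sigma}$, and rerun the freshness computation of Lemma~\ref{lem:fresh}. The differences are cosmetic. You decompose over the values of $\sigma$ rather than of $T_n$. You also prove, and use, only the inclusion $\mathcal G_{n,\sigma}\subseteq\mathcal F_{T_n+\sigma}$, where the paper asserts equality without spelling it out.
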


\begin{proof}
The random variable $R=T_n+\sigma$ is a stopping time for the original
coefficient filtration.  Indeed, this follows from the stopping-time
property of $\sigma$ after decomposing according to the values of
$T_n$: if $A\in\mathcal F_{T_n+k}$, then
$A\cap\{T_n=m\}\in\mathcal F_{m+k}$.  Consequently, for every $N$,
\[
 \{R\le N\}
 =
 \bigcup_{m=0}^{N}
 \bigl(
 \{T_n=m\}\cap\{\sigma\le N-m\}
 \bigr)\in\mathcal F_N.
\]
The definitions also give
$\mathcal F_R=\mathcal G_{n,\sigma}$.  If
$A\in\mathcal F_R$, independence of
$\boldsymbol u_{N+1}$ from $\mathcal F_N$ gives
\[
\begin{aligned}
 \Prob\bigl(
 A\cap\{\boldsymbol u_{R+1}=a\}
 \bigr)
 &=
 \sum_{N\ge0}
 \Prob\bigl(
 A\cap\{R=N\}\cap\{\boldsymbol u_{N+1}=a\}
 \bigr)\\
 &=q^{-r}\sum_{N\ge0}\Prob(A\cap\{R=N\})
 =q^{-r}\Prob(A).
\end{aligned}
\]
This proves the conditional identity.
\end{proof}

For $1\le j\le r$, the epoch $\tau_{n,j}$ is an almost surely finite
stopping time for $(\mathcal G_{n,k})_{k\ge0}$, since
\[
 \{\tau_{n,j}\le k\}
 =
 \{\nu_{n,k}\ge j+1\}\in\mathcal G_{n,k},
\]
and finiteness follows from \eqref{eq:kernel-growth-bound}.

\begin{theorem}[Positive-jump clock and tail]
\label{thm:jump-clock}
Define $I_n=0$ on $\{J_n^{(r)}=0\}$.  On the event
$\{J_n^{(r)}=1\}$, there is a unique random index
$I_n\in\{1,\ldots,r\}$ such that
\[
        \Delta_n^{(r)}=\tau_{n,I_n}.
\]
For $1\le j\le r$, the following global conditional
identity holds almost surely:
\begin{equation}
 \Prob(I_n=j\mid\mathcal F_{T_n+1})
 =
 \one_{\{J_n^{(r)}=1\}}\,
 q^{j-r}\prod_{a=1}^{j-1}(1-q^{a-r}),
 \qquad 1\le j\le r,
 \label{eq:jump-clock-law}
\end{equation}
where an empty product equals one.  Equivalently, after
restricting to $\{J_n^{(r)}=1\}$, these are the conditional
probabilities of the $r$ possible kernel-growth epochs.  Also,
\[
 \Prob(I_n=0\mid\mathcal F_{T_n+1})
 =
 \one_{\{J_n^{(r)}=0\}}.
\]
 Moreover,
\begin{equation}
        \Delta_n^{(r)}\le \mathcal S_n^{(r)}+r
        \qquad\text{on }\{J_n^{(r)}=1\},
 \label{eq:jump-deterministic-bound}
\end{equation}
and for every integer $h\ge r+1$,
\begin{equation}
 \Prob\bigl(\Delta_n^{(r)}\ge h\mid J_n^{(r)}=1\bigr)
 \le
 \frac{q^{r+1-h}}{q-1}.
 \label{eq:jump-tail}
\end{equation}
The estimate is uniform in $n$.
\end{theorem}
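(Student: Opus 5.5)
The plan is to follow the kernel filtration $\nu_{n,k}$ and read off, at each kernel-growth epoch, one fresh residual vector. At the end I check the three displayed claims.

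\textbf{Step 1: a reduction to linear dependence.} Work on $\{D_n=d\}$. The matrix $H^{(r)}_{\boldsymbol\alpha}(n+1;d+k)$ is $B_{n,k}$ with $r$ new rows appended, one for each coordinate $i$. Hence
\[
\ker H^{(r)}_{\boldsymbol\alpha}(n+1;d+k)=\ker E_k,
\]
where $E_k:\ker B_{n,k}\to\F_q^r$ sends $Q$ to its residual vector $\bigl([t^{-(n+1)}]Q\alpha_i\bigr)_i$. By Lemma~\ref{lem:kernel} and monotonicity of the profile, $D_{n+1}=d+\min\{k:\ker E_k\ne0\}$.

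The kernels $\ker B_{n,k}$ are nested in $k$ and change only at the epochs $\tau_{n,j}$. At $\tau_{n,j}$ I choose a new element $P_j\in\ker B_{n,\tau_{n,j}}$ that is monic of degree $d+\tau_{n,j}-1$. This is possible because the nullity rises by exactly one when a column is appended, so some kernel element has a nonzero top coefficient. Set $P_0=Q_n$. For $\tau_{n,j}\le k<\tau_{n,j+1}$ the kernel is spanned by $P_0,\ldots,P_j$, so $\ker E_k\ne0$ iff the residual vectors $\rho^{(0)},\ldots,\rho^{(j)}$ of $P_0,\ldots,P_j$ are linearly dependent. Here $\rho^{(0)}=\rho_n^{(r)}$.

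It follows that on $\{J_n^{(r)}=1\}$ (that is, $\rho^{(0)}\neq0$), we get $\Delta_n^{(r)}=\tau_{n,I_n}$. Here $I_n$ is the first $j$ such that $\rho^{(0)},\ldots,\rho^{(j)}$ are dependent. This index is unique because the $\tau_{n,j}$ strictly increase. Moreover $I_n\le r$, since $r+1$ vectors in $\F_q^r$ are always dependent and \eqref{eq:kernel-growth-bound} guarantees $\tau_{n,r}<\infty$. The case $\{J_n^{(r)}=0\}$ is Theorem~\ref{thm:renewal}.

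\textbf{Step 2: freshness of each $\rho^{(j)}$.} By monicity of $P_j$,
\[
\rho^{(j)}=\boldsymbol u_{T_n+\tau_{n,j}+1}+(\text{a }\mathcal G_{n,\tau_{n,j}}\text{-measurable vector}).
\]
The measurability holds because $P_j$, chosen canonically (for instance the reduced monic generator), is $\mathcal G_{n,\tau_{n,j}}$-measurable.

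Apply the auxiliary freshness lemma with $\sigma=\tau_{n,j}$. It shows that $\rho^{(j)}$ is conditionally uniform on $\F_q^r$ given $\mathcal G_{n,\tau_{n,j}}$. That sigma-algebra contains $\mathcal F_{T_n+1}$ and $\rho^{(0)},\ldots,\rho^{(j-1)}$.

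Suppose the first $j$ vectors are independent and span a $j$-dimensional space. Then the conditional probability that $\rho^{(j)}$ falls in that span is $q^{j-r}$. Iterating through the tower $\mathcal F_{T_n+1}\subseteq\mathcal G_{n,\tau_{n,1}}\subseteq\cdots$ gives \eqref{eq:jump-clock-law}. The indicator $\one_{\{J_n^{(r)}=1\}}$ factors out because it is $\mathcal F_{T_n+1}$-measurable. The delicate point, which I expect to be the main obstacle, is making the choice of $P_j$ and the nesting of stopped sigma-algebras fully rigorous at random indices; Lemma~\ref{lem:stopping-time-property} serves as the template.

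\textbf{Step 3: the deterministic bound and the tail.} For \eqref{eq:jump-deterministic-bound}, combine $\Delta_n^{(r)}=\tau_{n,I_n}\le\tau_{n,r}$ with \eqref{eq:kernel-growth-bound} to get $\Delta_n^{(r)}\le rn+r+1-d=\mathcal S_n^{(r)}+r$.

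For the tail \eqref{eq:jump-tail}, the event $\{\Delta_n^{(r)}\ge h\}$ forces $\mathcal S_n^{(r)}\ge h-r$, that is, $D_n\le rn+1-(h-r)$. A union bound over monic $Q$ of length $e\le D$, with Proposition~\ref{prop:fixedQ}, gives
\[
\Prob(D_n\le D)\le\sum_{e\le D}q^{e-1}q^{-rn}\le\frac{q^{D-rn}}{q-1},
\]
so $\Prob(\mathcal S_n^{(r)}\ge s)\le q^{1-s}/(q-1)$.

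Finally, $\mathcal S_n^{(r)}$ is $\mathcal F_{T_n}$-measurable, and by Lemma~\ref{lem:fresh} the indicator $J_n^{(r)}$ is independent of $\mathcal F_{T_n}$. Therefore
\[
\Prob(\Delta_n^{(r)}\ge h,\ J_n^{(r)}=1)\le\Prob(\mathcal S_n^{(r)}\ge h-r)\,\Prob(J_n^{(r)}=1).
\]
Dividing by $\Prob(J_n^{(r)}=1)$ yields $q^{r+1-h}/(q-1)$, and this bound is uniform in $n$.
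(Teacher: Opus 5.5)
Your proposal is correct and follows the paper's proof in all essentials: the shifted-freshness lemma at $\sigma=\tau_{n,j}$, the hazard $q^{j-r}$ on survival through the first $j-1$ growth epochs, the tower property starting from $\mathcal F_{T_n+1}=\mathcal G_{n,1}$, the bound $\Delta_n^{(r)}\le\tau_{n,r}$ from \eqref{eq:kernel-growth-bound}, and the slack union bound combined with the independence of $J_n^{(r)}$ from $\mathcal F_{T_n}$. The only difference is packaging: you compute the hazard through the residual vectors of the monic kernel elements $P_0,\ldots,P_j$, whereas the paper works with the column space $W_{k-1}$ of $A_{k-1}$ and the affine set of bad fresh values; the two agree because $E\cap W_{k-1}$, viewed inside $\F_q^r$, is exactly the span of the residual vectors of a basis of $\ker B_{n,k-1}$.
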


\begin{proof}
Work on $\{D_n=d,J_n^{(r)}=1\}$.  For $k\ge0$, set
\[
        A_k=H_{\boldsymbol\alpha}^{(r)}(n+1;d+k).
\]
The jump event says precisely that $A_0$ has full column rank.  Suppose
inductively that $A_{k-1}$ has full column rank, and let $W_{k-1}$ be
its column space.  Passing from $A_{k-1}$ to $A_k$ appends one column.
All but its last entry in each of the $r$ Hankel blocks are
$\mathcal F_{T_n+k}$-measurable, while the last $r$ entries form the
fresh uniform vector $\boldsymbol u_{T_n+k+1}$.

Let $E\subset\F_q^{r(n+1)}$ be the $r$-dimensional coordinate space
supported on those last $r$ entries.  The new column can belong to
$W_{k-1}$ only if its projection to the other $rn$ coordinates lies
in the projection of $W_{k-1}$.  This occurs exactly when the new
column of $B_{n,k}$ is in the column space of $B_{n,k-1}$, or,
equivalently,
\[
        \nu_{n,k}=\nu_{n,k-1}+1.
\]
In that case, because $A_{k-1}$ has full column rank, the
coefficient-to-column isomorphism $x\mapsto A_{k-1}x$ identifies
\[
 \ker B_{n,k-1}
 \quad\text{with}\quad
 E\cap W_{k-1}.
\]
Thus the fresh vector values that make the new column dependent form
an affine subspace of $\F_q^r$ of dimension $\nu_{n,k-1}$, and
\begin{equation}
 \Prob\bigl(A_k\text{ loses full column rank}
       \mid\mathcal F_{T_n+k},\ A_{k-1}\text{ full rank}\bigr)
 =q^{\nu_{n,k-1}-r}.
 \label{eq:column-hazard}
\end{equation}
If the nullity does not increase, this conditional probability is
zero.

Because $D_{n+1}=d+\Delta_n^{(r)}$ is the least coefficient length
for which $H_{\boldsymbol\alpha}^{(r)}(n+1;\,\cdot\,)$ has a nonzero
kernel, $\Delta_n^{(r)}$ is the least $k\ge1$ for which $A_k$ fails to
have full column rank.

We now justify the passage from the fixed-$k$ calculation to the
random growth epochs.  Put
\[
 h_j=q^{j-r},
 \qquad 1\le j\le r,
\]
and let $E_{n,j}$ be the event that the update has survived through
the first $j-1$ kernel-growth epochs.  Thus
\[
 E_{n,1}=\{J_n^{(r)}=1\},
 \qquad
 E_{n,j}
 =
 \{J_n^{(r)}=1,\ \Delta_n^{(r)}>\tau_{n,j-1}\}
 \quad(2\le j\le r).
\]
On $E_{n,j}$, the matrix $A_{\tau_{n,j}-1}$ has full column rank and
\[
        \nu_{n,\tau_{n,j}-1}=j.
\]
The event $E_{n,j}$ belongs to
$\mathcal G_{n,\tau_{n,j}}$.  The shifted-freshness lemma above,
applied with $\sigma=\tau_{n,j}$, and the affine-subspace calculation
in \eqref{eq:column-hazard} therefore give
\begin{equation}
 \Prob\bigl(
 \Delta_n^{(r)}=\tau_{n,j}
 \mid \mathcal G_{n,\tau_{n,j}}
 \bigr)
 =
 \one_{E_{n,j}}h_j.
 \label{eq:random-epoch-hazard}
\end{equation}
Equivalently, on $E_{n,j}$ the conditional failure probability at the
$j$th growth epoch is $h_j$, and the conditional survival probability
is $1-h_j$.

Since $\tau_{n,j}\ge1$, one has
$\mathcal F_{T_n+1}=\mathcal G_{n,1}
\subseteq\mathcal G_{n,\tau_{n,j}}$.  The tower property applied to
\eqref{eq:random-epoch-hazard} yields, successively,
\[
 \Prob(E_{n,j}\mid\mathcal F_{T_n+1})
 =
 \one_{\{J_n^{(r)}=1\}}
 \prod_{a=1}^{j-1}(1-h_a)
\]
and hence
\[
 \Prob(I_n=j\mid\mathcal F_{T_n+1})
 =
 \one_{\{J_n^{(r)}=1\}}\,
 h_j\prod_{a=1}^{j-1}(1-h_a).
\]
This is \eqref{eq:jump-clock-law}.  Finally, $h_r=1$, and therefore
\[
 \sum_{j=1}^{r}
 h_j\prod_{a=1}^{j-1}(1-h_a)
 =
 1-\prod_{a=1}^{r}(1-h_a)
 =1.
\]
Thus, on the jump event, failure occurs at exactly one of the listed
growth epochs.  The argument is independent of the value $d$; summing
over the $\mathcal F_{T_n}$-measurable partition
$\{D_n=d\}$ gives the asserted global conditional identity.
The bound \eqref{eq:kernel-growth-bound} with $j=r$
gives
\[
 \Delta_n^{(r)}\le\tau_{n,r}
 \le rn+r+1-D_n=\mathcal S_n^{(r)}+r,
\]
which is \eqref{eq:jump-deterministic-bound}.

It remains to control $\mathcal S_n^{(r)}$.  If $s\ge1$ and
$m=rn+1-s\ge1$, then $\{\mathcal S_n^{(r)}\ge s\}$ implies that at
least one projective denominator line in $\mathbb P(V_m)$ cancels to
depth $n$.  Proposition~\ref{prop:fixedQ} and the union bound give
\begin{equation}
 \Prob(\mathcal S_n^{(r)}\ge s)
 \le
 \frac{q^m-1}{q-1}\,q^{-rn}
 <\frac{q^{1-s}}{q-1}.
 \label{eq:slack-tail}
\end{equation}
When $m<1$ the event is empty, so the simpler final bound remains
valid.  Since $\mathcal S_n^{(r)}$ is
$\mathcal F_{T_n}$-measurable and Lemma~\ref{lem:fresh} makes
$J_n^{(r)}$ independent of $\mathcal F_{T_n}$, the same estimate
holds conditionally on $J_n^{(r)}=1$.  Combining
\eqref{eq:jump-deterministic-bound} with \eqref{eq:slack-tail}, using
$s=h-r$, proves \eqref{eq:jump-tail}.
\end{proof}

\begin{corollary}[The exact two-series clock]
\label{cor:r-two-clock}
Let $r=2$. With the convention $I_n=0$ on
$\{J_n^{(2)}=0\}$,
\[
 \Prob\bigl(I_n=1\mid\mathcal F_{T_n+1}\bigr)
 =\frac{\one_{\{J_n^{(2)}=1\}}}{q},
 \qquad
 \Prob\bigl(I_n=2\mid\mathcal F_{T_n+1}\bigr)
 =\one_{\{J_n^{(2)}=1\}}\left(1-\frac1q\right).
\]
After restricting to $\{J_n^{(2)}=1\}$, this says that
the first increase of $\dim\cK_{D_n+k}^{(2)}(n)$ decides the jump
with probability $1/q$; if it does not, the jump occurs surely at the
second increase.  In particular,
\[
 \Delta_n^{(2)}\le 2n+3-D_n
\]
and, for $h\ge3$,
\[
 \Prob\bigl(\Delta_n^{(2)}\ge h\mid J_n^{(2)}=1\bigr)
 \le\frac{q^{3-h}}{q-1}.
\]
\end{corollary}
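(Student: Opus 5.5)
This corollary is the specialization $r=2$ of Theorem~\ref{thm:jump-clock}, so the plan is simply to substitute $r=2$ into each assertion of that theorem and check the arithmetic.

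\emph{The clock law.} Put $r=2$ in \eqref{eq:jump-clock-law}.
\begin{itemize}
\item For $j=1$ the product is empty, and the prefactor is $q^{1-2}=q^{-1}$. This gives $\Prob(I_n=1\mid\mathcal F_{T_n+1})=\one_{\{J_n^{(2)}=1\}}/q$.
\item For $j=2$ the prefactor is $q^{0}=1$ and the product is $(1-q^{1-2})=1-q^{-1}$. This gives the second displayed identity.
\end{itemize}
The two values sum to $\one_{\{J_n^{(2)}=1\}}$. This matches the identity $h_r=1$ noted in the proof of the theorem.

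\emph{The verbal description.} By the theorem, on $\{J_n^{(2)}=1\}$ we have $\Delta_n^{(2)}=\tau_{n,I_n}$ with $I_n\in\{1,2\}$. By definition, $\tau_{n,j}$ is the $j$th increase of $\nu_{n,k}=\dim\cK_{D_n+k}^{(2)}(n)$ as $k$ grows, since $B_{n,k}=H^{(2)}_{\boldsymbol\alpha}(n;D_n+k)$. So the event $I_n=1$ means the jump is decided at the first increase, which happens with conditional probability $1/q$. Otherwise $I_n=2$, and the jump occurs at the second increase with conditional probability one, because $h_2=1$.

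\emph{The deterministic bound.} Take $r=2$ in \eqref{eq:jump-deterministic-bound}. This gives $\Delta_n^{(2)}\le \mathcal S_n^{(2)}+2=2n+1-D_n+2=2n+3-D_n$ on the jump event. Off the jump event $\Delta_n^{(2)}=0$. Moreover $2n+3-D_n\ge 2\ge 0$, because $D_n\le 2n+1$ by the column-count bound. So the inequality holds everywhere.

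\emph{The tail bound.} Take $r=2$ in \eqref{eq:jump-tail}. For $h\ge3$ this gives $\Prob(\Delta_n^{(2)}\ge h\mid J_n^{(2)}=1)\le q^{3-h}/(q-1)$.

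There is no real obstacle here. The only point needing care is the deterministic bound off the jump event, which is handled by $D_n\le 2n+1$.
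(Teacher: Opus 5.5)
Your proposal is correct and matches the paper: the corollary is stated without a separate proof, as the $r=2$ specialization of Theorem~\ref{thm:jump-clock}, with hazards $q^{-1}$ and $1$, bound $\mathcal S_n^{(2)}+2=2n+3-D_n$, and tail threshold $h\ge r+1=3$. Your extra check that the deterministic bound also holds off the jump event, because $\Delta_n^{(2)}=0$ there and $D_n\le 2n+1$, is correct and covers the slightly unrestricted way the corollary phrases that inequality.
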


\begin{example}[A short marked profile over $\F_2$]
\label{ex:marked-profile}
Take $r=2$, $q=2$, and begin the coefficient sequence with
\[
\begin{array}{c|cccccc}
j&1&2&3&4&5&6\\ \hline
\boldsymbol u_j&(0,0)&(1,0)&(1,0)&(0,0)&(1,0)&(1,1).
\end{array}
\]
The first four profile states are
\[
\begin{array}{c|c|c|c|c}
n&D_n&Q_n&\rho_n^{(2)}&J_n^{(2)}\\ \hline
0&1&1&(0,0)&0\\
1&1&1&(1,0)&1\\
2&3&1+t+t^2&(0,0)&0\\
3&3&1+t+t^2&(0,1)&1.
\end{array}
\]
Indeed, no monic polynomial $t+a$ reaches depth two, since its
$t^{-1}$ coefficient vector is
$\boldsymbol u_2+a\boldsymbol u_1=(1,0)$.  The only nonzero constant
polynomial is $1$, and it also fails at depth two because
$\boldsymbol u_2\ne0$.  On the other hand,
$Q=1+t+t^2$ does reach depth two because
\[
 \boldsymbol u_1+\boldsymbol u_2+\boldsymbol u_3=0,
 \qquad
 \boldsymbol u_2+\boldsymbol u_3+\boldsymbol u_4=0.
\]
Its next two residuals are
\[
 \boldsymbol u_3+\boldsymbol u_4+\boldsymbol u_5=(0,0),
 \qquad
 \boldsymbol u_4+\boldsymbol u_5+\boldsymbol u_6=(0,1).
\]
Thus the first residual extends the same plateau, whereas the second
ends it and carries the projective mark $[0:1]\in\mathbb P^1(\F_2)$.
This finite computation is illustrative only.  For a Haar-random
input, before conditioning on this particular prefix,
Theorem~\ref{thm:renewal} asserts that the analogous residual sequence
is iid.
\end{example}

The exact renewal theorem also supplies finite-time and fluctuation
statements without additional Diophantine input.  Put
\[
        C_N^{(r)}
        =
        1+\sum_{n=0}^{N-1}J_n^{(r)}.
\]
This is the number of distinct values among
$L_r(0),L_r(1),\ldots,L_r(N)$.

\begin{corollary}[Exact finite-time law and fluctuations]
\label{cor:fluctuations}
Let $p_r=1-q^{-r}$.  Then
\[
        C_N^{(r)}-1\stackrel{\mathrm d}=
        \operatorname{Bin}(N,p_r).
\]
Here $\operatorname{Bin}(N,p_r)$ denotes the binomial distribution
with $N$ trials and success probability $p_r$.
Consequently,
\[
        \frac{C_N^{(r)}}N\longrightarrow p_r
        \qquad\text{almost surely},
\]
and
\[
 \frac{C_N^{(r)}-1-Np_r}
 {\sqrt{Np_r(1-p_r)}}
 \ \Longrightarrow\ \mathcal N(0,1).
\]
Here $\mathcal N(0,1)$ is the standard normal distribution.
Moreover, almost surely,
\[
 \limsup_{N\to\infty}
 \frac{C_N^{(r)}-1-Np_r}
 {\sqrt{2Np_r(1-p_r)\log\log N}}
 =1,
\]
and the corresponding liminf equals $-1$.
\end{corollary}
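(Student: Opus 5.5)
Every assertion here will follow from Theorem~\ref{thm:renewal} combined with standard limit theorems for sums of iid Bernoulli variables; no further Diophantine input is needed. The first step is to check the counting identity. The profile $L_r$ is nondecreasing, so the number of distinct values among $L_r(0),\ldots,L_r(N)$ equals $1$ plus the number of indices $0\le n\le N-1$ at which $L_r(n+1)>L_r(n)$. This gives $C_N^{(r)}=1+\sum_{n<N}J_n^{(r)}$. Theorem~\ref{thm:renewal} says that $J_0^{(r)},J_1^{(r)},\ldots$ are iid Bernoulli with parameter $p_r$. Hence $C_N^{(r)}-1$ is a sum of $N$ iid Bernoulli$(p_r)$ variables, so it is exactly $\operatorname{Bin}(N,p_r)$. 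This identity is exact for every $N$.

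For the almost sure statement I will apply the strong law of large numbers to $(J_n^{(r)})$. It gives $(C_N^{(r)}-1)/N\to p_r$ almost surely, and the extra term $1/N$ tends to $0$. For the central limit statement I will use the classical de~Moivre--Laplace (Lindeberg--L\'evy) theorem for the centered sum $\sum_{n<N}(J_n^{(r)}-p_r)$. The summands have variance $p_r(1-p_r)$, which is strictly positive because $0<q^{-r}<1$, so the normalization is nondegenerate.

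For the law of the iterated logarithm I will apply the Hartman--Wintner theorem to the iid, bounded, mean-zero variables $X_n=J_n^{(r)}-p_r$, whose variance is $p_r(1-p_r)$. This yields limsup $1$ and liminf $-1$ almost surely for $\sum_{n<N}X_n/\sqrt{2Np_r(1-p_r)\log\log N}$, and that quotient is exactly the displayed one.

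The only point requiring care is that the limit theorems need the full iid property of the whole sequence, not merely the correct marginal laws. That property is precisely what Theorem~\ref{thm:renewal} supplies: its stopped-sigma-algebra argument gives the joint law of $(J_n^{(r)})$, so I expect no real obstacle beyond citing it. The rest is bookkeeping of constants.
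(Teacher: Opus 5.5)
Your proposal is correct and follows the paper's proof. The paper likewise reads off the exact binomial law from the iid Bernoulli conclusion of Theorem~\ref{thm:renewal}, then cites the strong law, central limit theorem, and law of the iterated logarithm for iid Bernoulli variables.
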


\begin{proof}
The binomial identity follows directly from
Theorem~\ref{thm:renewal}.  The remaining statements are the strong
law, central limit theorem, and law of the iterated logarithm for iid
Bernoulli variables.
\end{proof}

\section{Density of attained minimal denominator lengths}
\label{sec:density}

Let
\[
        \cA_r(D)=\#\{j:d_j^{(r)}\le D\}.
\]
The best-depth slope implies that the list of attained lengths is infinite almost
surely.  We work below on this full-measure set.
Whenever $M_D^{(r)}<\infty$, the attained lengths at most $D$ are exactly
the distinct values of
\[
        L_r(0),L_r(1),\ldots,L_r(M_D^{(r)}).
\]
Therefore
\[
        \cA_r(D)=C_{M_D^{(r)}}^{(r)}.
\]
Proposition~\ref{prop:fixedQ} and a countable union over nonzero
polynomials show that $M_D^{(r)}<\infty$ for every $D$ almost surely:
for each fixed $Q\neq0$,
\[
 \Prob(m_r(Q)=\infty)
 =
 \lim_{n\to\infty}q^{-rn}=0.
\]

\begin{theorem}[Density and spacing of attained lengths]\label{thm:density}
For Haar-almost every $\boldsymbol\alpha\in\pinf^r$,
\[
        \lim_{D\to\infty}\frac{\cA_r(D)}D
        =
        \frac{1-q^{-r}}r.
\]
Equivalently,
\[
        \lim_{j\to\infty}\frac{d_j^{(r)}}j
        =
        \frac{r}{1-q^{-r}}.
\]
\end{theorem}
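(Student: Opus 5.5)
The plan is to combine the identity $\cA_r(D)=C^{(r)}_{M_D^{(r)}}$, established just above on a full-measure set, with two almost-sure limits proved earlier. Let $\Omega_0$ be the intersection of three full-measure events. The first is the event where $M_D^{(r)}<\infty$ for every $D$ (by the countable union over $Q\ne0$). The second is the event of Proposition~\ref{prop:slope}, where $M_D^{(r)}/D\to1/r$. The third is the event of Corollary~\ref{cor:fluctuations}, where $C_N^{(r)}/N\to p_r=1-q^{-r}$ as $N\to\infty$ along the integers. On $\Omega_0$ the list of attained lengths is infinite, since $M_D^{(r)}\to\infty$ and $C_N^{(r)}\to\infty$. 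I will work pathwise on $\Omega_0$.

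The first step is to observe that $M_D^{(r)}\to\infty$ as $D\to\infty$. This already follows from the deterministic lower bound $M_D^{(r)}\ge\lfloor (D-1)/r\rfloor$. Because the strong law holds along the full integer sequence $N$, it also holds along any integer-valued sequence tending to infinity, even a random one. Hence $C^{(r)}_{M_D^{(r)}}/M_D^{(r)}\to p_r$. I then write
\[
 \frac{\cA_r(D)}{D}=\frac{C^{(r)}_{M_D^{(r)}}}{M_D^{(r)}}\cdot\frac{M_D^{(r)}}{D},
\]
and the product of the two limits gives $p_r\cdot\frac1r=\frac{1-q^{-r}}{r}$.

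For the equivalent spacing form, I note that $\cA_r(d_j^{(r)})=j$, because $d_j^{(r)}$ is the $j$th attained length. Since $d_j^{(r)}\to\infty$, substituting $D=d_j^{(r)}$ into the density limit gives $j/d_j^{(r)}\to(1-q^{-r})/r$, and inverting yields $d_j^{(r)}/j\to r/(1-q^{-r})$. Conversely, if $d_j^{(r)}/j\to\kappa$, then $\cA_r(D)/D\to1/\kappa$ by the standard sandwich $d_{\cA_r(D)}^{(r)}\le D<d_{\cA_r(D)+1}^{(r)}$. So the two formulations are equivalent.

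I expect the only delicate point to be the composition of an almost-sure limit in $N$ with the random index $M_D^{(r)}$. The obstacle disappears once everything is argued for each fixed $\omega\in\Omega_0$, since a deterministic sequence $N_D\to\infty$ preserves a convergent limit of $C_N/N$. No independence between $M_D^{(r)}$ and the Bernoulli sequence is needed. It is also worth checking that $\cA_r(D)=C^{(r)}_{M_D^{(r)}}$ counts correctly at the endpoint: the values $L_r(0),\dots,L_r(M_D^{(r)})$ are all at most $D$, and every attained length at most $D$ occurs among them, because $L_r$ is nondecreasing and $L_r(n)\le D$ exactly when $n\le M_D^{(r)}$.
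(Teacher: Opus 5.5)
Your proposal is correct and follows the paper's proof essentially step for step: factor $\cA_r(D)/D=\bigl(C^{(r)}_{M_D^{(r)}}/M_D^{(r)}\bigr)\bigl(M_D^{(r)}/D\bigr)$, apply the strong law of Corollary~\ref{cor:fluctuations} pathwise along the divergent index $M_D^{(r)}$ together with Proposition~\ref{prop:slope}, and then substitute $D=d_j^{(r)}$ using $\cA_r(d_j^{(r)})=j$. Your extra checks go slightly beyond what the paper writes out: the endpoint verification of $\cA_r(D)=C^{(r)}_{M_D^{(r)}}$, and the sandwich $d^{(r)}_{\cA_r(D)}\le D<d^{(r)}_{\cA_r(D)+1}$ for the converse direction of the equivalence. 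Both are correct.
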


\begin{proof}
By Corollary~\ref{cor:fluctuations},
\[
        \frac{C_N^{(r)}}N\longrightarrow1-q^{-r}
        \qquad\text{almost surely}.
\]
By Proposition~\ref{prop:slope},
\[
        \frac{M_D^{(r)}}D\longrightarrow\frac1r.
\]
In particular, $M_D^{(r)}\to\infty$, so the almost-sure limit for
$C_N^{(r)}/N$ may be evaluated along the random integer sequence
$N=M_D^{(r)}$.
Hence
\[
 \frac{\cA_r(D)}D
 =
 \frac{C_{M_D^{(r)}}^{(r)}}{M_D^{(r)}}
 \frac{M_D^{(r)}}D
 \longrightarrow
 \frac{1-q^{-r}}r.
\]
Writing $\lambda=(1-q^{-r})/r$, one has
$\cA_r(d_j^{(r)})=j$.  Hence
\[
        \frac{j}{d_j^{(r)}}
        =
        \frac{\cA_r(d_j^{(r)})}{d_j^{(r)}}
        \longrightarrow\lambda,
\]
which proves the inverse-density, or spacing, statement.
\end{proof}

\begin{remark}
When $r=1$, Proposition~\ref{prop:one-series} gives
\[
        d_j^{(1)}
        =
        1+\sum_{k=1}^{j-1}\deg A_k.
\]
Thus the spacing statement in Theorem~\ref{thm:density} is also an
immediate form of the classical law
\[
        \frac1j\sum_{k=1}^j\deg A_k
        \longrightarrow\frac{q}{q-1}
\]
of Houndonougbo and Berth\'e--Nakada
\cite{Houndonougbo,BertheNakada,LertchoosakulNairQuantitative}.
The content newly asserted by Theorem~\ref{thm:density} is its
simultaneous $r\ge2$ specialization obtained from the exact renewal
at the stopping times and the best-depth slope.
\end{remark}

\begin{remark}
Corollary~\ref{cor:fluctuations} concerns the depth-indexed count
$C_N^{(r)}$.  A central limit theorem for the length-indexed count
$\cA_r(D)=C_{M_D^{(r)}}^{(r)}$ additionally requires sufficiently
precise fluctuation control for the random time $M_D^{(r)}$; it does
not follow from the Bernoulli theorem alone.
\end{remark}

\section{Exact duality with joint linear complexity}
\label{sec:joint}

The coefficient vectors
\[
        (u_j^{(1)},\ldots,u_j^{(r)})\in\F_q^r
\]
form an $r$-fold multisequence.  For $N\in\Nzero$, let $\JL_r(N)$
denote the least integer $0\le\ell\le N$ for which there exist
\[
        a_0,\ldots,a_\ell\in\F_q,
        \qquad a_\ell\neq0,
\]
satisfying
\[
        \sum_{k=0}^{\ell}a_ku_{j+k}^{(i)}=0
\]
for every $1\le i\le r$ and every
$1\le j\le N-\ell$.  Thus $\JL_r(0)=0$.  When $N\ge1$, the case
$\ell=0$ occurs exactly when all first $N$ coefficient vectors are
zero; this fixes explicitly the zero-prefix convention.
For $r=1$, the equivalence between recurrence length,
continued-fraction convergents, and linear complexity is classical
\cite{NiederreiterProbabilistic,NiederreiterCombinatorial,
VielhaberIsometry}.  The multisequence formulation and its typical
slope are treated in
\cite{NiederreiterJointSurvey,NiederreiterWang,DaiFengYang,
VielhaberCanales}.

A recurrence polynomial of degree $\ell$ for the first $N$ vectors
belongs to
\[
        \cK_{\ell+1}^{(r)}(N-\ell),
\]
whereas every nonzero element of this kernel supplies a recurrence
polynomial of degree at most $\ell$.  Putting
$d=\ell+1$ therefore gives the exact indexing relation
\[
 \JL_r(N)
 =
 \min_{\substack{1\le d\le N+1\\
                  \cK_d^{(r)}(N-d+1)\neq0}}
 (d-1).
\]

\begin{proposition}[Profile correspondence]
\label{prop:profile-correspondence}
For every $n\ge0$, put
\[
        d=L_r(n),
        \qquad
        N=n+d-1.
\]
Then
\[
        \JL_r(N)=d-1.
\]
\end{proposition}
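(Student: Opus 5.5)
Write $\ell=d-1$ and use the indexing relation displayed just above the statement,
\[
 \JL_r(N)=\min\{d'-1:\ 1\le d'\le N+1,\ \cK_{d'}^{(r)}(N-d'+1)\neq0\}.
\]
The proof then has two parts: show that $d'=d$ is admissible, which gives $\JL_r(N)\le d-1$, and show that no $d'<d$ is admissible, which gives $\JL_r(N)\ge d-1$.

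For the upper bound, note that $N-d+1=n$ by the definition of $N$. Also $1\le d\le N+1$, because $d\ge1$ and $N+1=n+d\ge d$. By definition of $L_r(n)$ we have $\cK_d^{(r)}(n)\neq0$, so $d'=d$ occurs in the minimum.

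For the lower bound, suppose $1\le d'<d$ and $\cK_{d'}^{(r)}(N-d'+1)\neq0$. The depth here is $N-d'+1=n+(d-d')\ge n+1>n$. The kernels are nested decreasingly in the depth variable, $\cK_{d'}^{(r)}(m+1)\subseteq\cK_{d'}^{(r)}(m)$. Equivalently, by Lemma~\ref{lem:kernel}, a nonzero $Q\in V_{d'}$ with $m_r(Q)\ge N-d'+1$ also has $m_r(Q)\ge n$. Either way we get $\cK_{d'}^{(r)}(n)\neq0$, which contradicts the minimality $L_r(n)=d$. Hence every admissible $d'$ satisfies $d'\ge d$, and so $\JL_r(N)=d-1$.

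I expect the main obstacle to be justifying the indexing relation itself, since the proof leans on it entirely. In particular one must check the degree bookkeeping: a recurrence polynomial of exact degree $\ell'$ for the first $N$ vectors has coefficient vector in $\cK_{\ell'+1}^{(r)}(N-\ell')$. Conversely, a nonzero kernel element of $\cK_{d'}^{(r)}(N-d'+1)$ of actual degree $e<d'-1$ lies in $\cK_{e+1}^{(r)}(N-d'+1)\subseteq$ ... and gives a recurrence of length $e$ valid on indices $j\le N-d'+1\le N-e$. That is a weaker range than $N-e$ requires. So in the minimum formula one should instead read the condition as ``some nonzero element of degree $\le d'-1$ cancels to depth $N-d'+1$''. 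The lower-bound argument above handles exactly this, because it only uses the existence of a nonzero element of $V_{d'}$ with common cancellation depth exceeding $n$. I would state the relation in that form and invoke the zero-prefix convention only to confirm the edge cases $d=1$ and $n=0$.
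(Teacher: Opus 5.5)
Your lower bound is correct and is essentially the paper's argument. It uses only the valid half of the indexing relation: a recurrence of exact degree $\ell$ for the first $N$ vectors is a nonzero element of $\cK_{\ell+1}^{(r)}(N-\ell)$. Nesting in the depth variable then contradicts $L_r(n)=d$.

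The gap is in the upper bound. Going from ``$d'=d$ occurs in the minimum'' to $\JL_r(N)\le d-1$ requires the other half of the relation: every nonzero element of $\cK_{d'}^{(r)}(N-d'+1)$ must yield a recurrence of degree $d'-1$ for the first $N$ vectors. That is exactly the degree-bookkeeping failure you spotted, but it affects the upper bound, not the lower bound. Your rereading of the condition (``some nonzero element of degree $\le d'-1$ cancels to depth $N-d'+1$'') is just $\cK_{d'}^{(r)}(N-d'+1)\neq0$ again, so the restated relation is still false. For example, take $r=1$ and $(u_1,u_2,u_3)=(0,0,1)$. The constant $Q=1$ lies in $\cK_2^{(1)}(2)$, so the minimum is at most $1$. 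Yet $\JL_1(3)=3$, because for $\ell\in\{0,1,2\}$ the recurrence condition at $j=3-\ell$ reads $a_\ell u_3=0$. So neither the displayed relation nor your version can be cited for this direction.

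The missing idea is minimality. Because $\cK_{d-1}^{(r)}(n)=0$, every nonzero element of $\cK_d^{(r)}(n)$, for instance the monic generator $Q_n$, has exact degree $d-1$. Then $m_r(Q_n)\ge n=N-(d-1)$ says precisely that its coefficients, with leading coefficient $1\neq0$, satisfy the recurrence conditions for $1\le j\le N-(d-1)$. This gives $\JL_r(N)\le d-1$ straight from the definition of $\JL_r$. It is what the paper's one-line upper bound rests on, and with it you do not need the minimum formula at all.
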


\begin{proof}
The inclusion
$\cK_d^{(r)}(n)\neq0$ gives $\JL_r(N)\le d-1$.  If a recurrence of
degree at most $d-2$ existed for the first $N$ coefficient vectors,
its denominator would have coefficient length at most $d-1$ and
cancellation depth at least
\[
        N-(d-2)=n+1.
\]
It would in particular lie nontrivially in
$\cK_{d-1}^{(r)}(n)$, contradicting $d=L_r(n)$.
\end{proof}

Thus $L_r$ is a depth-parametrized counterpart of the joint
linear-complexity profile, rather than the joint linear-complexity
profile itself.  The exact change of coordinates is displayed below.

\begin{figure}[!htbp]
\centering
\begin{tikzpicture}[
  >=Latex,
  every node/.style={font=\small}
]
  \begin{scope}[x=0.54cm,y=0.52cm]
    \draw[->] (0,0) -- (7.8,0) node[right] {$n$};
    \draw[->] (0,0) -- (0,5.4) node[above] {$d$};
    \draw[profileblue,very thick]
      (0,1)--(2,1)--(2,2)--(4,2)--(4,3)--(7,3);
    \fill[profileblue] (4,3) circle (2.2pt);
    \node[above left] at (4,3) {$(n,d)$};
    \node[below=5mm] at (3.8,0) {$d=L_r(n)$};
  \end{scope}

  \node[
    align=center
  ] (shear) at (7.2,1.65)
    {$\Phi(n,d)=(N,\ell)$\\
     $N=n+d-1,\quad\ell=d-1$};
  \draw[->,thick] (4.3,1.65) -- (shear.west);
  \draw[->,thick] (shear.east) -- (10,1.65);

  \begin{scope}[xshift=10.2cm,x=0.54cm,y=0.52cm]
    \draw[->] (0,0) -- (8.2,0) node[right] {$N$};
    \draw[->] (0,0) -- (0,5.4) node[above] {$\ell$};
    \draw[survivegreen,very thick]
      (0,0)--(2,0)--(2,1)--(5,1)--(5,2)--(6,2)--(6,3)--(8,3);
    \fill[survivegreen] (6,2) circle (2.2pt);
    \node[above left] at (6,2) {$(N,\ell)$};
    \node[below=5mm] at (4.1,0) {$\ell=\JL_r(N)$};
  \end{scope}
\end{tikzpicture}
\caption{A schematic frontier picture of the shear relating a point
of the minimal common-denominator length profile to a selected point
of the joint-linear-complexity profile.  The displayed coordinate
change is exact.}
\label{fig:profile-shear}
\end{figure}
\FloatBarrier

Here is the precise slope consequence.  Put $d_n=L_r(n)$ and
$N_n=n+d_n-1$.  On the full-measure set where the classical profile
slope holds, Proposition~\ref{prop:profile-correspondence} gives
\[
 \frac{d_n-1}{n+d_n-1}
 =
 \frac{\JL_r(N_n)}{N_n}
 \longrightarrow\frac{r}{r+1}.
\]
For $n\ge1$, setting $x_n=(d_n-1)/n$ rewrites the left-hand side as
$x_n/(1+x_n)$.  Solving this relation shows that $x_n\to r$, and
therefore
\[
        \frac{L_r(n)}n\longrightarrow r.
\]
Finally, $M_D^{(r)}=\max\{n:L_r(n)\le D\}$ is the generalized inverse
of $L_r$.  Applying the inequalities
\[
 (r-\varepsilon)n\le L_r(n)\le(r+\varepsilon)n
\]
for all sufficiently large $n$, and then letting
$\varepsilon\downarrow0$, yields
$M_D^{(r)}/D\to1/r$.  This makes explicit the direction in which the
classical profile slope supplies the best-depth slope used above.

\subsection{Record slack is a joint-complexity deviation}

For $0\neq Q\in\F_q[t]$, define
\[
        \tau_r(Q)=rm_r(Q)-d(Q),
\]
and
\[
        R_D^{(r)}
        =
        \max\{\tau_r(Q):0\neq Q\in V_D\}.
\]
Here and below $D\ge1$.

\begin{proposition}[Exact record duality]\label{prop:record-duality}
Assume that no nonzero polynomial has infinite common cancellation
depth.  Then, for every $D\ge1$,
\[
 R_D^{(r)}
 =
 \max_{\substack{N\in\Nzero\\ \JL_r(N)+1\le D}}
 \bigl(rN-(r+1)\JL_r(N)-1\bigr).
\]
\end{proposition}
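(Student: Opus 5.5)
We will prove the two inequalities separately, translating between a denominator $Q$ and the recurrence window it certifies. Throughout we use the indexing relation
\[
 \JL_r(N)=\min\{d-1:1\le d\le N+1,\ \cK_d^{(r)}(N-d+1)\neq0\},
\]
together with Lemma~\ref{lem:kernel}. The hypothesis that no nonzero polynomial has infinite common cancellation depth guarantees that every $m_r(Q)$ is finite, so $\tau_r(Q)$ is an integer. It also guarantees that $R_D^{(r)}$ is a maximum over a finite set $V_D\setminus\{0\}$, and hence is finite.

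\textbf{Step 1: $R_D^{(r)}$ is at most the right-hand side.} Let $0\neq Q\in V_D$, and put $e=d(Q)\le D$ and $m=m_r(Q)$. Set $N=m+e-1$. Then $Q\in\cK_e^{(r)}(m)=\cK_e^{(r)}(N-e+1)$, so $\JL_r(N)\le e-1$, and in particular $\JL_r(N)+1\le D$. The candidate value is
\[
 rN-(r+1)\JL_r(N)-1\ \ge\ r(m+e-1)-(r+1)(e-1)-1=rm-e=\tau_r(Q).
\]
The inequality holds because the expression is decreasing in $\JL_r(N)$. Taking the maximum over $Q$ gives one direction.

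\textbf{Step 2: the right-hand side is at most $R_D^{(r)}$.} Take $N$ with $\ell=\JL_r(N)\le D-1$. Choose a recurrence polynomial $Q$ of degree exactly $\ell$ witnessing $\JL_r(N)$. Then $d(Q)=\ell+1\le D$ and $Q\in\cK_{\ell+1}^{(r)}(N-\ell)$, so $m_r(Q)\ge N-\ell$. Hence
\[
 \tau_r(Q)\ge r(N-\ell)-(\ell+1)=rN-(r+1)\ell-1.
\]
Since $Q\in V_D$, the right-hand term is at most $R_D^{(r)}$.

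\textbf{Main obstacle.} The arithmetic itself is routine. The only delicate point is whether the right-hand maximum is actually attained and finite; Step 1 exhibits it as bounded above by the finite quantity $R_D^{(r)}$, and we need it to be a maximum rather than a supremum.

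We handle this as follows. For $N$ with $\JL_r(N)\le D-1$, Step 2 already gives the bound $\le R_D^{(r)}$. The maximizer $Q^*$ of $R_D^{(r)}$ then produces, through Step 1 with $N^*=m_r(Q^*)+d(Q^*)-1$, a value at least $R_D^{(r)}$. Combined with Step 2, that value equals $R_D^{(r)}$, so the maximum is attained at $N^*$.

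We also need to check the degenerate convention $N=0$. Here $\JL_r(0)=0$ and the value is $-1$, which is at most $\tau_r(1)=rm_r(1)-1$ because $m_r(1)\ge0$. So $N=0$ is consistent with both directions.
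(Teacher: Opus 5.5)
Your proof is correct, and your two inequalities are exactly the paper's. A denominator $Q$ of length $d\le D$ and depth $n$ yields the window $N=n+d-1$ with $\JL_r(N)\le d-1$. Conversely, a minimal recurrence for the first $N$ vectors is a denominator of length $\JL_r(N)+1$ and depth at least $N-\JL_r(N)$.

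The real difference is how you show that the right-hand maximum exists. The paper proves that the eligible set $\{N:\JL_r(N)+1\le D\}$ is finite. Otherwise some fixed element of the finite set $V_D$ would be a recurrence denominator for unboundedly many $N$, and hence would have infinite depth. You never bound the eligible set. Instead, Step~2 bounds every candidate value by the finite integer $R_D^{(r)}$. Step~1, applied to a maximizer $Q^*$, then shows that the value at $N^*=m_r(Q^*)+d(Q^*)-1$ is at least $R_D^{(r)}$, so the maximum is attained there. Your route is slightly more economical: the hypothesis is used only to make $R_D^{(r)}$ a maximum of finitely many integers. The paper's argument gives the extra information that only finitely many windows are eligible.

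One point of justification in Step~1 needs fixing. Conclude $\JL_r(N)\le e-1$ directly: $Q$ has degree exactly $e-1$ and cancels to depth $m=N-(e-1)$, which by definition makes it a recurrence of degree $e-1$ for the first $N$ vectors. This is how the paper argues. Do not route it through the displayed indexing relation, because the implication $\cK_d^{(r)}(N-d+1)\neq0\Rightarrow\JL_r(N)\le d-1$ fails when every kernel element has degree below $d-1$. For instance, take $r=1$ and the prefix $u_1=0$, $u_2=1$. The constant $1$ lies in $\cK_2^{(1)}(1)$, yet $\JL_1(2)=2$. Your conclusion is unaffected, because your witness $Q$ has exact degree $e-1$.
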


\begin{proof}
The maximum on the right is well defined.  Indeed, the eligible set
contains $N=0$.  If it were unbounded, then for arbitrarily large
$N$ there would be a nonzero recurrence denominator in the finite
set $V_D$.  One such denominator would occur for unboundedly many
$N$ and hence would have infinite common cancellation depth,
contrary to the hypothesis.

Let $Q$ have coefficient length $d\le D$ and depth $n=m_r(Q)$.
Set
\[
        N=n+d-1.
\]
Then $Q$ supplies a recurrence of degree $d-1$ for the first $N$
coefficient vectors, so $\JL_r(N)\le d-1$.  Therefore
\[
\begin{aligned}
 rN-(r+1)\JL_r(N)-1
 &\ge r(n+d-1)-(r+1)(d-1)-1\\
 &=rn-d\\
 &=\tau_r(Q).
\end{aligned}
\]
Also $\JL_r(N)+1\le d\le D$.  Taking the maximum over $Q$ proves one
inequality.

Conversely, let $\ell=\JL_r(N)$ with $\ell+1\le D$.  A minimal
recurrence denominator has coefficient length $\ell+1$ and
cancellation depth at least $N-\ell$.  Its slack is at least
\[
        r(N-\ell)-(\ell+1)
        =
        rN-(r+1)\ell-1.
\]
Taking the maximum over $N$ proves the reverse inequality.
\end{proof}

\begin{remark}
Proposition~\ref{prop:record-duality} is an exact deterministic
coordinate identity.  No logarithm law for $R_D^{(r)}$ is asserted
here; obtaining a self-contained sharp record law would require
additional estimates for joint-linear-complexity deviations.
\end{remark}

\section{Further questions}

The exact renewal theorem leaves several concrete problems that are
not settled by the present paper.

\subheading{Limiting positive-jump laws}
Theorem~\ref{thm:jump-clock} gives a depth-uniform tail bound and an
exact law for the kernel-growth index $I_n$.  It does not, however,
determine the distribution of the random epochs $\tau_{n,j}$, and
hence does not give a closed formula for the scalar jump size when
$r\ge2$.  For $r=1$, Proposition~\ref{prop:one-series} identifies the
jump sizes with partial-quotient degrees.  For $r\ge2$, do the
conditional laws
\[
 \operatorname{Law}\bigl(\Delta_n^{(r)}\mid J_n^{(r)}=1\bigr)
\]
converge as $n\to\infty$?  If so, one would like an explicit
probability generating function and the joint limiting law of
\[
        \bigl(\Delta_n^{(r)},[\rho_n^{(r)}]\bigr)
        \quad\text{conditioned on }J_n^{(r)}=1.
\]
A complementary finite-time problem is to identify a minimal state
that generates the successive epochs $\tau_{n,j}$; for $r=2$,
Corollary~\ref{cor:r-two-clock} reduces the issue to the joint law of
two such epochs.  An explicit transition kernel could yield
fluctuation results for the length-indexed count $\cA_r(D)$.  The jump
heights arising in
symbol-by-symbol joint-linear-complexity algorithms are related but
are not the depth-coordinate increments $\Delta_n^{(r)}$; the shear in
Proposition~\ref{prop:profile-correspondence} is essential when
comparing the two observables
\cite{DaiFengYang,VielhaberBDM,VielhaberCanales}.

\subheading{Projectivized extreme-value processes}
Recall that
$\mathbb P(V_D)=(V_D\setminus\{0\})/\F_q^\times$ is the set of
projective denominator lines of coefficient length at most $D$.
Both $d(Q)$ and $m_r(Q)$ are invariant under multiplication of $Q$ by
an element of $\F_q^\times$, so they are well defined on a projective
denominator line $[Q]\in\mathbb P(V_D)$.  On the full-measure set
where every nonzero polynomial has finite common cancellation depth,
define, for $k\in\mathbb Z$, the exceedance count
\[
 \mathcal N_D(k)
 =
 \#\left\{
 [Q]\in\mathbb P(V_D):
 rm_r(Q)-d(Q)\ge \lfloor\log_qD\rfloor+k
 \right\}.
\]
Does the family of nested counts $\bigl(\mathcal N_D(k)\bigr)_{k\in
\mathbb Z}$ converge along subsequences for which the fractional part
of $\log_qD$ converges?  More specifically, is the limiting
exceedance process Poisson with an explicit intensity, and does this
imply a discrete-Gumbel law for
\[
        R_D^{(r)}-\lfloor\log_qD\rfloor?
\]
Projectivization removes the deterministic scalar clusters
$\{cQ:c\in\F_q^\times\}$, but dependencies among distinct denominator
lines and the lattice-valued centering still have to be controlled.
Such a result would complement the exact record identity in
Proposition~\ref{prop:record-duality} with a distributional law.

\subheading{Unequal-depth multiparameter renewal laws}
For
$\boldsymbol n=(n_1,\ldots,n_r)\in\Nzero^r$, define
\[
 L_r(\boldsymbol n)
 =
 \min\left\{
 d\ge1:
 \begin{array}{l}
 \text{there is }0\ne Q\in V_d\text{ such that}\\[-2pt]
 c(Q\alpha_i)\ge n_i\text{ for every }1\le i\le r
 \end{array}
 \right\}.
\]
A monotone nearest-neighbor path is a map
$\gamma:\Nzero\to\Nzero^r$ with $\gamma(0)=(0,\ldots,0)$ and
$\gamma(k+1)-\gamma(k)\in\{\boldsymbol e_1,\ldots,
\boldsymbol e_r\}$, where $\boldsymbol e_i$ is the $i$th standard
basis vector.  For which paths does the process
$k\mapsto L_r(\gamma(k))$ admit an exact fresh-innovation coding?
Can the pathwise codings be made compatible across different paths,
thereby producing a consistent multiparameter transition structure
on $\Nzero^r$?  Algorithms for deterministic multisequence synthesis
with unequal input lengths are already available
\cite{SchmidtSidorenko}; the open issue here is the exact Haar
probability law and the dependence between intersecting paths, not
the mere computation of $L_r(\boldsymbol n)$.

\section*{Acknowledgements}

This work was initiated during the author's visit to the School
of Mathematics at the Tata Institute of Fundamental Research.  The
author thanks Anish Ghosh and Gaurav Aggarwal for their
hospitality and stimulating conversations.
This work was supported by the Basic Science Research Program through
the National Research Foundation of Korea (NRF), funded by the Ministry
of Education (grant no.~RS-2025-25415913).

\end{document}